\newtheorem{theorem}{Theorem}[section]
\newtheorem{proposition}[theorem]{Proposition}
\newtheorem{lemma}[theorem]{Lemma}
\newtheorem{remark}[theorem]{Remark}
\newcommand\R{\mathbb{R}}
\newcommand{\la}{\lambda}
\newcommand{\supp}{{\rm supp}{\hspace{.05cm}}}
\numberwithin{equation}{section}
\theoremstyle{definition}
\title
 [Schr\"{o}dinger-Poisson systems involving critical nonlocal term]
  {Multiple positive solutions for Schr\"{o}dinger-Poisson systems involving critical nonlocal term}
\author{Liejun Shen\ and \ Xiaohua Yao}
\address{ Liejun Shen,  Hubei Key Laboratory of Mathematical Sciences and School of Mathematics and Statistics,
 Central China Normal University, Wuhan, 430079, P. R. China }
\email{liejunshen@sina.com}
\address{Xiaohua Yao, Hubei Key Laboratory of Mathematical Sciences and School of Mathematics and Statistics,
 Central China Normal University, Wuhan, 430079, P.R. China}
\email{yaoxiaohua@mail.ccnu.edu.cn }
\date{\today}
\subjclass[2000]{ 35J20, 35J60, 35J92.}
\keywords{Schr\"{o}dinger-Poisson system, critical nonlocal term, variational method, least energy solution.}
\begin{document}

\maketitle
\begin{abstract} The present study is concerned with the following Schr\"{o}dinger-Poisson system involving critical nonlocal term
$$
\left\{%
\begin{array}{ll}
    -\Delta u+u-K(x)\phi |u|^3u=\la f(x)|u|^{q-2}u, & x\in\R^3, \\
    -\Delta \phi=K(x)|u|^5, &  x\in\R^3,\\
\end{array}%
\right.
$$
where $1<q<2$ and $\lambda>0$ is a parameter. Under suitable assumptions on $K(x)$ and $f(x)$, there exists $\lambda_0=\lambda_0(q,S,f,K)>0$ such that for any $\lambda\in(0,\lambda_0)$, the above Schr\"{o}dinger-Poisson system possesses at least two positive solutions by standard variational method, where a positive least energy solution will also be obtained.
\end{abstract}


\section{Introduction and Main Results}
Due to the real physical meaning, the following Schr\"{o}dinger-Poisson system
\begin{equation}\label{introduction1}
 \left\{%
\begin{array}{ll}
    -\Delta u+V(x)u+\phi u=f(x,u), & x\in\R^3, \\
    -\Delta \phi=u^2, &  x\in\R^3,\\
\end{array}%
\right.
\end{equation}
has been studied extensively by many scholars in the last several decades. The system like \eqref{introduction1} firstly introduced by Benci and Fortunato \cite{Benci1} was used to
describe solitary waves for nonlinear Sch\"{o}rdinger type equations and look for the existence of standing waves interacting with an unknown electrostatic field. We refer the readers to \cite{Benci1,Benci2,P. L. Lions3,Markowich} and the references therein to get a more physical background of the system \eqref{introduction1}.

In recent years, by classical variational
methods, there are many interesting works about the existence and non-existence of positive solutions, positive
ground states, multiple solutions, sign-changing solutions and semiclassical states
to the system \eqref{introduction1} with different assumptions on the potential $V(x)$ and the nonlinearity $f(x,u)$ were established. If $V(x)\equiv 1$ and $f(x,u)=|u|^{p-1}u$, T. d'Aprile and D. Mugnai \cite{dAprile2} showed that the system \eqref{introduction1} has no nontrivial solutions when $p\leq 1$ or $p\geq 5$. For the case $4\leq p<6$, the existence of radial and non-radial solutions was studied in \cite{Coclite,dAprile1,dAprile3}. D. Ruiz \cite{Ruiz} proved the existence and nonexistence of nontrivial solutions when $1<p<5$. When $V(x)\equiv0$ and $f(x,u)=g(u)$,
A. Azzollini, P. d'Avenia and A. Pomponio \cite{Azzollini1} investigated the existence of nontrivial radial solutions when $\mu\in(0,\mu_0)$ for the following system
\begin{equation}\label{introduction4}
   \left\{%
\begin{array}{ll}
    -\Delta u+ \mu\phi u=g(u), & x\in\R^3, \\
    -\Delta \phi=\mu u^2, &  x\in\R^3,\\
\end{array}%
\right.
\end{equation}
under the conditions $g\in C(\R)$ and
\[
 (H_1)\ \  -\infty<\mathop{\lim\inf}_{t\to0}\frac{g(t)}{t}\leq\mathop{\lim\sup}_{t\to0}\frac{g(t)}{t}=-m<0;\ \ \ \ \ \ \ \ \ \ \ \ \ \ \ \   \ \ \ \ \ \ \ \ \ \ \ \   \ \ \ \ \ \ \ \ \ \ \ \ \ \ \ \   \ \ \ \ \ \ \ \ \ \ \ \ \ \ \ \
\]
\[
 (H_2)\ \  -\infty\leq\mathop{\lim\sup}_{t\to\infty}\frac{g(t)}{t^{{5}}}\leq0;\ \ \ \ \ \ \ \ \ \ \ \ \ \ \ \   \ \ \ \ \ \ \ \ \ \ \ \   \ \ \ \  \ \ \ \ \ \ \ \ \ \ \ \   \ \ \  \ \ \ \ \ \ \ \ \ \ \ \   \ \ \ \ \ \ \  \ \ \ \ \ \ \ \   \ \ \ \ \ \ \ \ \ \ \ \   \ \ \ \  \ \ \  \ \ \ \ \ \ \ \ \ \ \ \ \ \ \  \ \ \ \ \ \ \ \ \ \ \ \
\]
\[
 (H_3)\ \  \text{there exists}\ \ \xi>0\ \  \text{such that}\ \ \int_0^\xi g(t)dt>0.\ \ \ \ \ \ \ \ \ \ \ \ \ \ \ \   \ \ \ \ \ \ \ \ \ \ \ \   \ \ \ \ \ \  \ \ \ \ \ \ \ \   \ \ \ \ \ \ \ \ \ \ \ \   \ \ \ \  \ \ \ \ \ \ \ \
\]
We mention here that the hypotheses $(H_1)-(H_3)$ are the so-called Berestycki-Lions conditions, which were introduced
in H. Berestycki and P. L. Lions \cite{Berestycki} for the derivation of the ground state solution of \eqref{introduction4}. If $V(x)\not\equiv constant$ and $f(x,u)=|u|^{p-1}u+\mu |u|^{4}u$ with $2<p<5$, the existence of positive ground state was obtained by  Z. Liu and S. Guo \cite{Liu0}. By using superposition principle established by
N. Ackermann \cite{Ackermann}, the system \eqref{introduction1} with a periodic potential was studied by  J. Sun and S. Ma \cite{Sun}, where the existence of infinitely many geometrically distinct solutions was proved. For other related and important results, we refer the readers to \cite{Alves2,He,Huang,Jiang,Zhao} and their
references.

However, the results for the following general Schr\"{o}dinger-Poisson system
\begin{equation}\label{mmmmmm}
  \left\{%
\begin{array}{ll}
    -\Delta u+u+p \phi g(u)=f(x,u), & x\in\R^3, \\
    -\Delta \phi=2pG(u), &  x\in\R^3\\
\end{array}%
\right.
\end{equation}
are not so fruitful as the case $g(u)=u$ and $p\in\R$, where $|g(t)|\leq C(|t|+|t|^s)$ with $s\in (1,4)$, please see \cite{Azzollini3,Li2} for example. When $s=4$ in \eqref{mmmmmm},
A. Azzollini and P. d'Avenia \cite{Azzollini2} firstly studied the following Schr\"{o}dinger-Poisson system with critical nonlocal term
\begin{equation}\label{aaaaaaaa}
 \left\{%
\begin{array}{ll}
    -\Delta u=\mu u+p\phi|u|^3u, & x\in B_R, \\
    -\Delta \phi=p|u|^5, &  x\in B_R,\\
    u=\phi=0,& \text{on} \ \ \partial B_R.
\end{array}%
\right.
\end{equation}
 Note that although the second equation can be solved by a Green's function, the term $p|u|^5$ will result in a nonlocal critically growing nonlinearity in \eqref{aaaaaaaa}. After it, by using a monotonic trick introduced by L. Jeanjean \cite{Jeanjean}, F. Li, Y. Li and J. Shi \cite{Li} specially proved
\[
  \left\{%
\begin{array}{ll}
    -\Delta u+bu-\phi |u|^3u=f(u), & x\in\R^3, \\
    -\Delta \phi=|u|^5, &  x\in\R^3,\\
\end{array}%
\right.
\]
possesses at least one positive radially symmetric solution when $b>0$ is a constant.

In their celebrated paper, A. Ambrosetti, H. Br\'{e}zis, G. Cerami \cite{Ambrosetti} studied the following semilinear elliptic equation with concave-convex nonlinearities:
\begin{equation}\label{c}
 \left\{%
\begin{array}{ll}
    - \Delta u = |u|^{p-2}u+\mu |u|^{q-2}u, &\text{in}\ \ \Omega, \\
    u=0, &  \text{on}\ \  \partial\Omega,\\
\end{array}%
\right.
\end{equation}
where $\Omega$ is a bounded domain in $\R^N$ with $\mu>0$ and $1<q<2<p\leq 2^*=\frac{2N}{N-2}$. By variational method, they have obtained the existence and multiplicity of positive solutions to the problem \eqref{c}. Subsequently, an increasing number of researchers have
paid attention to semilinear elliptic equations with critical exponent and concave-convex nonlinearities,
for example, see \cite{Barstch,Bouchekif,Chen,Hsu,Huang0,Lei,Nyamoradi,Wu} and the references therein.

To the best of our knowledge, the Schrodinger-Poisson system with critical nonlocal term was only studied in \cite{Azzollini2,Li,Liu}. Meanwhile there are very few papers on existence of multiple results for Schrodinger-Poisson system with concave-convex nonlinearities. Inspired by the works mentioned above, this paper will fill the gap and
prove the existence of multiple positive solutions for the following Schrodinger-Poisson system:
\begin{equation}\label{mainequation}
  \left\{%
\begin{array}{ll}
    -\Delta u+u-K(x)\phi |u|^3u=\la f(x)|u|^{q-2}u, & x\in\R^3, \\
    -\Delta \phi=K(x)|u|^5, &  x\in\R^3,\\
\end{array}%
\right.
\end{equation}
where $1<q<2$ and $\la>0$ ia a parameter. The assumptions on $K(x)$ and $f(x)$ are as follows:
\begin{enumerate}[$(K)$]
  \item \emph{$K(x)\in C(\R^3,\R^+)$, there exist some constants $C>0$, $\delta>0$ and $\beta\in[1,3)$ such that
$$
|K(x)-K(x_0)|\leq C|x-x_0|^\beta\ \ \text{if}\ \ |x-x_0|<\delta,
$$
and $x_0\in\R^3$ satisfies $K(x_0)=\max_{x\in\R3}K(x):=|K|_\infty<+\infty$.}
\end{enumerate}

\begin{enumerate}[$(F)$]
\item \emph{$f\in L^{\frac{2}{2-q}}(\R^3)$ with $f(x)\geq 0$ and $f(x)\not\equiv 0$.}
\end{enumerate}

Now we state our main result:

\begin{theorem}\label{maintheorem}
Assume $(K)$ and $(F)$, for any $1<q<2$ there exists $\la_0=\la_0(q,S,f,K)>0$ such that the system \eqref{mainequation} admits at least two positive solutions when $\la\in(0,\la_0)$. In addition, a positive least energy solution can also be established.
\end{theorem}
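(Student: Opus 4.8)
The plan is to reduce the system to a single variational problem for $u$, to produce one solution as a local minimizer generated by the concave term $\la f|u|^{q-2}u$ and a second one by a mountain pass over the resulting potential well, while tracking carefully the loss of compactness created by the critical nonlocal term. For $u\in H^1(\R^3)$ the Sobolev embedding gives $|u|^5\in L^{6/5}(\R^3)$, so by the Hardy--Littlewood--Sobolev inequality and Lax--Milgram there is a unique $\phi_u\in\D^{1,2}(\R^3)$ with $-\Delta\phi_u=K(x)|u|^5$; it is nonnegative, $5$-homogeneous and continuous in $u$, and $\int_{\R^3}K\phi_u|u|^5=\|\nabla\phi_u\|_2^2\le C|K|_\infty^2 S^{-5}\|\nabla u\|_2^{10}$ with $S$ the best Sobolev constant. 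Taking $\phi=\phi_u$, \eqref{mainequation} is the Euler--Lagrange equation of the $C^1$ functional
\[
I_\la(u)=\frac12\|u\|^2-\frac1{10}\int_{\R^3}K\phi_u|u|^5-\frac{\la}{q}\int_{\R^3}f|u|^q,\qquad\|u\|^2=\int_{\R^3}\bigl(|\nabla u|^2+u^2\bigr),
\]
the coefficient $\tfrac1{10}$ being forced by the $10$-homogeneity of the nonlocal term. To obtain \emph{positive} solutions I would replace $|u|^3u$, $|u|^5$, $|u|^{q-2}u$ by $(u^+)^4$, $(u^+)^5$, $(u^+)^{q-1}$: this changes nothing on the cone $\{u\ge0\}$, every nonzero critical point of the modified $I_\la$ is $\ge0$ (test with $u^-$), and then $u>0$ by the strong maximum principle. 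So it suffices to find two nonzero critical points of $I_\la$.

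For the geometry, $\int f|u|^q\le\|f\|_{2/(2-q)}\|u\|_2^q$ gives $I_\la(u)\ge\tfrac12\|u\|^2-C_1\|u\|^{10}-C_2\la\|u\|^q$; since $1<q<2$ one fixes a small $\rho>0$ with $\tfrac12\rho^2>C_1\rho^{10}$ and then $\la_0>0$ so that $I_\la|_{\{\|u\|=\rho\}}\ge\alpha>0$ for $\la\in(0,\la_0)$, while $I_\la(tv_0)<0$ for small $t$ whenever $\int f|v_0|^q>0$, hence $m_\la:=\inf_{\overline{B_\rho}}I_\la<0$. Ekeland's principle produces a $(PS)_{m_\la}$ sequence which, $m_\la<0$ lying below the threshold of the next step, converges to a critical point $u_1\neq0$ with $I_\la(u_1)=m_\la$ and $u_1>0$. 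A fibering/Nehari analysis of $t\mapsto I_\la(tu)$ (the concave--convex picture of \cite{Ambrosetti}: $\mathcal N^+\subset B_\rho$, $I_\la>0$ on $\mathcal N^-$, $\mathcal N^0=\{0\}$ for $\la$ small) then shows every nontrivial solution lies on $\mathcal N=\mathcal N^+\cup\mathcal N^-$ with energy $\ge m_\la=I_\la(u_1)$, so $u_1$ is a positive least-energy solution.

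The compactness statement I would establish is the technical core: with $c_\infty=c_\infty(S,|K|_\infty)>0$ the mountain-pass level of the limiting autonomous functional $u\mapsto\tfrac12\|\nabla u\|_2^2-\tfrac{|K|_\infty^2}{40\pi}\int_{\R^3}\int_{\R^3}\tfrac{|u(x)|^5|u(y)|^5}{|x-y|}\,dx\,dy$, the functional $I_\la$ satisfies $(PS)_c$ for every $c<c^*:=m_\la+c_\infty$. Indeed a $(PS)_c$ sequence $(u_n)$ is bounded (combine $I_\la(u_n)-\tfrac1{10}\langle I_\la'(u_n),u_n\rangle$ with $1<q<2$); writing $u_n\rightharpoonup u$ (a critical point) and $v_n=u_n-u$, a Brezis--Lieb lemma for the critical nonlocal term together with the compactness of $u\mapsto\int f|u|^q$ (from $f\in L^{2/(2-q)}$) yields $c=I_\la(u)+\tfrac25\ell$ and $\lim\int K\phi_{v_n}|v_n|^5=\ell$ with $\ell=\lim\|v_n\|^2$; since $\int K\phi_{v_n}|v_n|^5\le C|K|_\infty^2 S^{-5}\|v_n\|^{10}$, by the sharp Sobolev/Hardy--Littlewood--Sobolev inequalities either $\ell=0$ (so $v_n\to0$) or $\ell\ge\ell_0$ with $\tfrac25\ell_0=c_\infty$, and as $I_\la(u)\ge m_\la$ (either $u=0$ or $u\in\mathcal N$) the latter would force $c\ge c^*$, a contradiction.

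For the second solution I would use the mountain pass around $u_1$: since $u_1$ is a local minimum with $I_\la(u_1)=m_\la<0$ and $I_\la(u_1+Tw)\to-\infty$ as $T\to\infty$ for $w\ge0,\ w\not\equiv0$, the minimax value $c_1$ over paths joining $u_1$ to $\{I_\la<m_\la\}$ satisfies $c_1\ge\alpha>0>m_\la=I_\la(u_1)$, so a critical point at level $c_1$ is automatically distinct from $u_1$ and from $0$. Everything then reduces to showing $c_1<c^*$, and this is the hard part. I would test with the truncated Aubin--Talenti bubbles $w_\ep=\eta\,\ep^{-1/2}U\bigl((\cdot-x_0)/\ep\bigr)$ centred at the maximum point $x_0$ of $K$ and estimate $\max_{t\ge0}I_\la(u_1+tw_\ep)$: the identity $\langle I_\la'(u_1),w_\ep\rangle=0$ kills the terms linear in $w_\ep$; hypothesis $(K)$ with $\beta\in[1,3)$ makes the spatial fluctuation of $K$ near $x_0$ contribute only $O(\ep^{\beta+1})=o(\ep)$ to the nonlocal term ($\beta<3$ makes $\int_{\R^3}|z|^\beta\phi_U(z)U(z)^5\,dz$ convergent, $\beta\ge1$ makes it negligible); the unfavourable contribution $\tfrac12 t^2\|w_\ep\|_2^2=O(\ep)$ from the mass $\int u^2$ is then dominated by the interaction of the \emph{strictly positive} $u_1$ with the bubble inside the nonlocal functional, namely $-\tfrac15 K(x_0)\phi_{u_1}(x_0)\bigl(\int_{\R^3}U^5\bigr)t^5\ep^{1/2}+o(\ep^{1/2})$, which is strictly negative of order $\ep^{1/2}$ (and the concave term only helps further). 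This gives $\max_t I_\la(u_1+tw_\ep)\le m_\la+c_\infty-c\,\ep^{1/2}+o(\ep^{1/2})<c^*$ for $\ep$ small, hence $c_1<c^*$, the $(PS)_{c_1}$ condition applies, and one gets a second positive solution $u_2$ with $I_\la(u_2)=c_1>0$. The genuine difficulty is precisely this expansion — making the error bookkeeping sharp enough to see that the $O(\ep^{1/2})$ gain from the $u_1$--bubble interaction overrides the $O(\ep)$ loss from the mass term — together with the nonlocal Brezis--Lieb lemma and the identification $c^*=m_\la+c_\infty$ in the compactness step.
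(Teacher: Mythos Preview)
Your overall architecture (local minimizer from the concave term plus a mountain-pass solution, with a nonlocal Brezis--Lieb splitting governing compactness) is sound, but it differs from the paper's route in the two places that carry all the weight, and the paper's choices are considerably simpler.

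First, the paper runs the mountain pass from $0$, not from the local minimizer $u_1$, and breaks the critical level using only the concave term: testing with $t v_{\epsilon,x_0}$ (a truncated Talenti bubble at $x_0$) one has $\max_t J(tv_{\epsilon,x_0})\le \frac{2}{5}|K|_\infty^{-1/2}S^{3/2}+O(\epsilon)-C\lambda\,\epsilon^{q/2}$, and since $q/2<1$ the $\lambda\epsilon^{q/2}$ gain already beats the $O(\epsilon)$ mass and $O(\epsilon^\beta)$ $K$-fluctuation terms. This completely avoids the delicate expansion of the tenth-order nonlocal functional at $u_1+tw_\epsilon$ that your plan relies on; that expansion has many cross terms (not just the one you single out), requires continuity of $u_1$ and $\phi_{u_1}$ at $x_0$ via elliptic regularity, and your claimed orders are slightly off (the $K$-fluctuation is $O(\epsilon^\beta)$, not $O(\epsilon^{\beta+1})$; the bound on $\int K\phi_u|u|^5$ carries $S^{-6}$, not $S^{-5}$).

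Second, for the compactness threshold the paper does not use $m_\lambda+c_\infty$ or any Nehari decomposition. It uses the elementary bound $J(u)=J(u)-\tfrac{1}{10}\langle J'(u),u\rangle\ge \tfrac{2}{5}\|u\|^2-\tfrac{10-q}{10q}\lambda|f|_{2/(2-q)}\|u\|^q\ge -C_0\lambda^{2/(2-q)}$ valid for every critical point, and shows $(PS)_c$ holds whenever $c<\tfrac{2}{5}|K|_\infty^{-1/2}S^{3/2}-C_0\lambda^{2/(2-q)}$. This sidesteps the circularity in your plan, where establishing $I_\lambda(u)\ge m_\lambda$ for \emph{all} critical points (needed in your $(PS)$ analysis) requires the full fibering/Nehari dichotomy before the second solution is even found. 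Similarly, the least-energy solution in the paper is obtained by directly minimizing $J$ over the critical set $\mathcal S$ (a bounded $(PS)_m$ sequence with $m<0$, same compactness as before, plus Fatou), rather than via the Nehari identification $m_\lambda=\inf_{\mathcal N}I_\lambda$.

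In short: your route would work with enough care, and the interaction-with-$u_1$ idea is a legitimate alternative for pushing below the critical threshold, but the paper gets both the level estimate and the energy lower bound essentially for free from the concave term and a one-line Young inequality, making the whole argument substantially shorter.
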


\begin{remark}
\
\begin{enumerate}[$(1)$]
  \item There are a lot of functions to meet the assumption $(K)$ such as $K\equiv1$. Without doubt that $K(x_0)>0$ is necessary and otherwise $K(x)\equiv 0$ implies that the critical term disappears and the system \eqref{mainequation} degenerates to a semilinear Schr\"{o}dinger equation. This kind of assumption $(K)$ was firstly introduced by F. Gazzola and M. Lazzarino \cite{Gazzola} to consider a semilinear Schr\"{o}dinger equation.
  \end{enumerate}
  \begin{enumerate}[$(2)$]
  \item  The assumption of non-negativity for $f(x)$ is not essential to the analysis of Theorem \ref{maintheorem}. In a word, the method used in Theorem \ref{maintheorem} can deal with the case when $f(x)$ is sign-changing.
  \end{enumerate}
\end{remark}

\begin{remark}
It is worth to point out here that we not only show the existence of $\la_0$ obtained in Theorem \ref{maintheorem}, but also give the concrete expression:
\[
\lambda_0=\la_0(q,S,f,K):=\frac{4q}{(10-q)|f|_{\frac{2}{2-q}}}\bigg(\frac{5S^6(2-q)}{|K|_\infty^2(10-q)}\bigg)^{\frac{2-q}{8}}>0,
\]
where $S>0$ is the best Sobolev constant \big(see \eqref{Sobolev}\big).
\end{remark}

The nonlocal critical term in \eqref{mainequation} makes the problem complicated because of the lack of compactness imbedding of $H^1(\R^3)$ into $L^r(\R^3)$ for $r\in[2,6]$. Moreover we do not assume that the functions $K(x)$ and $f(x)$ are radial symmetric, so it is impossible to work in the radial symmetric space. To overcome it, the assumption on $f(x)$ plays an vital role. However if we replace $\R^3$ by a bounded domain $\Omega$, the above difficult disappears. Of course, the assumption on $K(x)$ can never make a contribution to recovering the compactness. What we want to emphasize is that either $K(x)\equiv1$ or $K(x)$ satisfies $(K)$ in our problem, the proof does not have an essential difficult, but this difference seems to cause some special obstacles in \cite{Li} with this case. Meanwhile, by means of a totally same idea but some simpler calculations employed in Theorem \ref{maintheorem}, one immediately has the following result which will not be proved in detail.

\begin{theorem}
Assume $\Omega\subset \R^3$ is a bounded domain with a smooth boundary $\partial\Omega$, then
for any $1<q<2$ there exists $\widetilde{\la}_0=\widetilde{\la}_0(q,S)>0$ such that the conclusions obtained in Theorem \ref{maintheorem} still holds for
the following system
 \[
  \left\{%
\begin{array}{ll}
    -\Delta u-\phi |u|^3u=\la |u|^{q-2}u, & x\in\Omega, \\
    -\Delta \phi= |u|^5, &  x\in\Omega,\\
    u= \phi=0, &   \text{on}\ \   \partial\Omega,\\
\end{array}%
\right.
\]
when $ {\la}\in(0,\widetilde{\la}_0)$.
\end{theorem}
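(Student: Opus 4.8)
The plan is to reduce the system to a single variational problem on $H_0^1(\Omega)$ and then run the concave--convex scheme of Ambrosetti--Br\'ezis--Cerami adapted to the critical nonlocal term; the whole argument is lighter than in Theorem \ref{maintheorem} because on a bounded $\Omega$ the embedding $H_0^1(\Omega)\hookrightarrow L^r(\Omega)$ is compact for every $r\in[1,6)$, so no integrability hypothesis on a weight is needed to recover compactness of the subcritical and concave contributions, and $K\equiv1$ removes all weight bookkeeping. First I would solve the second equation: for each $u\in H_0^1(\Omega)$ the problem $-\Delta\phi=|u|^5$, $\phi|_{\partial\Omega}=0$ has a unique solution $\phi_u\ge0$, with $\phi_{tu}=t^5\phi_u$ and $\int_\Omega\phi_u|u|^5=\|\nabla\phi_u\|_2^2\le C|u|_6^{10}$ by a Hardy--Littlewood--Sobolev (or Green-function) estimate, the last bound carrying the critical exponent $6$. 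Substituting $\phi_u$ into the first equation identifies weak solutions of the system with critical points of
\[
I_\la(u)=\frac12\int_\Omega|\nabla u|^2-\frac1{10}\int_\Omega\phi_u|u|^5-\frac{\la}{q}\int_\Omega|u|^q,\qquad u\in H_0^1(\Omega),
\]
and since $I_\la(|u|)=I_\la(u)$ and $\phi_u\ge0$, every critical point may be taken nonnegative and is then positive in $\Omega$ by the strong maximum principle applied to $-\Delta u=\phi_u u^4+\la u^{q-1}\ge0$.

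Next I would record the elementary geometry. From $\int_\Omega|u|^q\le|\Omega|^{\frac{6-q}{6}}|u|_6^q\le C\|u\|^q$ and the critical bound one gets $I_\la(u)\ge\tfrac12\|u\|^2-C_1\|u\|^{10}-C_2\la\|u\|^q$; as $q<2<10$, for all small $\la$ the scalar map $t\mapsto\tfrac12t^2-C_1t^{10}-C_2\la t^q$ is negative for small $t>0$, has a strict positive maximum on an intermediate sphere $\|u\|=R$, and tends to $-\infty$. Hence $I_\la\ge\alpha>0$ on $\{\|u\|=R\}$ while $c_1:=\inf_{\overline{B_R}}I_\la<0$. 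Applying Ekeland's variational principle on $\overline{B_R}$ produces a Palais--Smale sequence at the negative level $c_1$; its convergence (next paragraph) yields a first critical point $u_1$ with $I_\la(u_1)=c_1<0$, positive by the remark above. A comparison of energy levels then shows that $c_1$ coincides with the ground-state level $\inf_{\mathcal N}I_\la$ over the Nehari manifold $\mathcal N$, so $u_1$ is a positive least energy solution.

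The technical core is a compactness threshold: $I_\la$ satisfies $(PS)_c$ for every $c$ below an explicit level $c^*=c^*(q,S)$. Boundedness of a $(PS)_c$ sequence $u_n$ follows from $I_\la(u_n)-\tfrac1{10}\langle I_\la'(u_n),u_n\rangle=\tfrac25\|u_n\|^2-\la\tfrac{10-q}{10q}\int_\Omega|u_n|^q$ together with $q<2$. Writing $u_n\rightharpoonup u$, Rellich compactness disposes of the concave term, and a Br\'ezis--Lieb splitting for the nonlocal functional gives $\int\phi_{u_n}|u_n|^5=\int\phi_u|u|^5+\int\phi_{v_n}|v_n|^5+o(1)$ with $v_n=u_n-u\rightharpoonup0$. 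Setting $\|v_n\|^2\to\ell^2$, the Palais--Smale relations yield $\ell^2=\lim\int\phi_{v_n}|v_n|^5$, which combined with $\int\phi_{v_n}|v_n|^5\le\bar C|v_n|_6^{10}$ and $S|v_n|_6^2\le\|v_n\|^2$ forces either $\ell=0$ (strong convergence, done) or $\ell^8\ge S^5/\bar C$, the latter pinning $c$ above an $S$-dependent level. Tracking the residual energy $\tfrac25\ell^2$ against $I_\la(u)$ is exactly where the value $\widetilde\la_0=\widetilde\la_0(q,S)$ is calibrated.

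Finally, for the second solution I would use the mountain-pass geometry already in place ($I_\la(0)=0$, $I_\la\ge\alpha>0$ on $\|u\|=R$, and $I_\la(tw)\to-\infty$), which produces a candidate level $c_2>0$. The decisive step, and the main obstacle, is the strict estimate $c_2<c^*$, so that $(PS)_{c_2}$ applies and delivers a nontrivial critical point $u_2$ with $I_\la(u_2)=c_2>0$, necessarily distinct from $u_1$. I would establish it by a Br\'ezis--Nirenberg computation: insert truncated Aubin--Talenti instantons $U_\eps$ concentrated at an interior point of $\Omega$, estimate $\sup_{t\ge0}I_\la(tU_\eps)$, and check that the instanton carries precisely the critical energy $c^*$ while the concave term $-\tfrac{\la}{q}\int_\Omega|U_\eps|^q$ strictly lowers the maximum below $c^*$ for $\la\in(0,\widetilde\la_0)$. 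Positivity of $u_2$ again follows by passing to $u^+$ and invoking the maximum principle. The two genuine difficulties are thus the sharp threshold $c^*$ and this strict energy estimate; both are quantitatively simpler here than in Theorem \ref{maintheorem}, since $\Omega$ is bounded and $K\equiv1$, but conceptually identical.
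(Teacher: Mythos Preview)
Your plan is correct and matches the paper's own strategy exactly: the paper explicitly states that this theorem follows from ``a totally same idea but some simpler calculations employed in Theorem~\ref{maintheorem}'' and gives no separate proof, so your reduction to a single functional on $H_0^1(\Omega)$, the mountain-pass/Ekeland dichotomy, the Br\'ezis--Lieb splitting of the nonlocal term, and the Br\'ezis--Nirenberg estimate with truncated Talenti instantons are precisely what the paper has in mind.

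One point of divergence worth flagging: you assert that the Ekeland solution $u_1$ is the least-energy solution because ``a comparison of energy levels shows that $c_1$ coincides with $\inf_{\mathcal N}I_\la$.'' This is plausible but not automatic---in concave--convex problems the Nehari set splits into $\mathcal N^+$ and $\mathcal N^-$, and identifying $c_1$ with the global Nehari infimum requires an argument you have not supplied. The paper (for Theorem~\ref{maintheorem}) does \emph{not} go through the Nehari manifold; instead it treats the least-energy solution as a separate third step: it defines $m=\inf\{J(u):u\neq0,\ J'(u)=0\}$, observes $m\le J(u_2)<0$, takes a minimizing sequence of critical points (which is automatically a $(PS)_m$ sequence with $m<0<\tfrac25 S^{3/2}-C_0\la^{2/(2-q)}$), and passes to the limit using Fatou and the same compactness machinery. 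This route avoids any Nehari-manifold bookkeeping and would close your argument cleanly.
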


The outline of this paper is as follows. In Section 2, we present some preliminary results for Theorem \ref{maintheorem}. In Section 3, we will prove Theorem \ref{maintheorem}.
\\\\
\textbf{Notations.} Throughout this paper we shall denote by $C$ and $C_i$ ($i=1, 2,\cdots$) for various positive constants whose exact value may change from lines to lines but are not essential to the analysis of problem. We use $``\to"$ and $``\rightharpoonup"$ to denote the strong and weak convergence in the related function space, respectively. For any $\rho>0$ and any $x\in \R^3$, $B_\rho(x)$ denotes the ball of radius $\rho$ centered at $x$, that is, $B_\rho(x):=\{y\in \R^3:|y-x|<\rho\}$.

Let $(X,\|\cdot\|)$ be a Banach space with its dual space $(X^{*},\|\cdot\|_{*})$, and $\Psi$ be its functional on $X$. The Palais-Smale sequence at level $d\in\R$ ($(PS)_d$ sequence in short) corresponding to $\Psi$ satisfies that $\Psi(x_n)\to d$ and $\Psi^{\prime}(x_n)\to 0$ as $n\to\infty$, where $\{x_n\}\subset X$.

\section{Some Preliminaries}
In this section, we will give some lemmas which are useful for the main results. To solve the system \eqref{mainequation}, we introduce some function spaces. Throughout the paper, we consider the Hilbert space $H^1(\R^3)$ with the usual inner product
$$
(u,v)=\int_{\R^3}\nabla u\nabla v+uvdx,\ \ \ \  \ \   \forall u,v\in H^1(\R^3)
$$
and the norm
$$
\|u\|=\bigg(\int_{\R^3}|\nabla u|^2+ u^2dx\bigg)^{\frac{1}{2}}.
$$
$L^r(\R^3)$ ($1\leq r<+\infty$) is the Lebesgue space, $|\cdot|_r$ means its usual $L^r$-norm and
the space
\[
D^{1,2}(\R^3)=\big\{u\in L^6(\R^3): \nabla u \in L^2(\R^3)\big\}
\]
equips with its usual norm and inner product
$$
\|u\|_{D^{1,2}(\R^3)}=\bigg(\int_{\R^3}|\nabla u|^2dx\bigg)^{\frac{1}{2}}\ \ \text{and}\ \  (u,v)_{D^{1,2}(\R^3)}=\int_{\R^3}\nabla u\nabla vdx,
$$
respectively. The positive constant $S$ denotes the best Sobolev constant:
\begin{equation}\label{Sobolev}
  S:=\inf_{u\in D^{1,2}(\R^3)\setminus \{0\}}\frac{\int_{\R^3}|\nabla u|^2dx}{\big(\int_{\R^3}|u|^6dx\big)^{\frac{1}{3}}}.
\end{equation}

In the following, one can use the Lax-Milgram theorem, for every $u\in H^1(\R^3)$, there exists a unique $\phi_{u}\in D^{1,2}(\R^3)$ such that
\begin{equation}\label{Lax}
  -\Delta \phi=K(x)|u|^5
\end{equation}
 and $\phi_{u}$ can be written as
\begin{equation}\label{Riesz}
  \phi_{u}(x)=\frac{1}{4\pi}\int_{\R^3}\frac{K(y)|u(y)|^5}{|x-y|}dy.
\end{equation}
Substituting \eqref{Riesz} into \eqref{mainequation}, we get a single elliptic equation with a nonlocal term:
\[
-\Delta u+u-K(x)\phi_u|u|^3u=\lambda f(x)|u|^{q-2}u
\]
whose corresponding functional $J:H^1(\R^3)\to\R$ is defined by
\begin{equation}\label{functional}
  J(u)=\frac{1}{2}\|u\|^2-\frac{1}{10}\int_{\R^3}K(x)\phi_u|u|^5dx-\frac{\la}{q}\int_{\R^3}f(x)|u|^qdx.
\end{equation}
We mention here that the idea of this reduction method was proposed by Benci and Fortunato \cite{Benci1} and it is a basic strategy for studying Schr\"{o}dinger-Poisson system today.

For simplicity, the conditions in Theorem \ref{maintheorem} always hold true thought this paper and we don't assume them any longer unless specially needed.
 To know more about the solution $\phi$ of the Poisson equation in \eqref{mainequation} which can leads to a critical nonlocal term, we have the following key lemma:

\begin{lemma}\label{phi}
For every $u\in H^1(\R^3)$, we have the following conclusions:
\begin{enumerate}[$(1)$]
  \item $\phi_u(x)\geq 0$ for every $x\in\R^3$;
  \end{enumerate}
  \begin{enumerate}[$(2)$]
  \item $\|\phi_u\|_{D^{1,2}(\R^3)}^2=\int_{\R^3}K(x)\phi_u|u|^5dx$;
  \end{enumerate}
  \begin{enumerate}[$(3)$]
  \item for any $t>0$, $\phi_{tu}=t^5\phi_{u}$;
  \end{enumerate}
  \begin{enumerate}[$(4)$]
  \item if $u_n\rightharpoonup u$ in $H^1(\R^3)$, then $\phi_{u_n}\rightharpoonup \phi_u$ in $D^{1,2}(\R^3)$.
\end{enumerate}
\end{lemma}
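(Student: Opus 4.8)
The plan is to verify each of the four assertions about the Poisson nonlinearity $\phi_u$ defined via the Riesz representation \eqref{Riesz}, relying on the Lax--Milgram construction \eqref{Lax} and basic Sobolev estimates. For $(1)$, I would simply observe that $\phi_u(x)=\frac{1}{4\pi}\int_{\R^3}\frac{K(y)|u(y)|^5}{|x-y|}\wrt y$, and since $K(y)\geq 0$ by $(K)$, $|u(y)|^5\geq 0$, and $|x-y|^{-1}>0$, the integrand is nonnegative, hence $\phi_u\geq 0$ pointwise; one should also remark that $\phi_u$ is well-defined (finite a.e.) because $|u|^5\in L^{6/5}(\R^3)$ by the Sobolev embedding $H^1(\R^3)\hookrightarrow L^6(\R^3)$, and the Hardy--Littlewood--Sobolev inequality gives $\phi_u\in L^6(\R^3)$ with $\nabla\phi_u\in L^2(\R^3)$.

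For $(2)$, I would test the weak formulation of \eqref{Lax} with $\phi_u$ itself: since $\phi_u\in D^{1,2}(\R^3)$ solves $-\Delta\phi_u=K(x)|u|^5$ weakly, multiplying by $\phi_u$ and integrating by parts yields $\int_{\R^3}|\nabla\phi_u|^2\wrt x=\int_{\R^3}K(x)|u|^5\phi_u\wrt x$, which is exactly $\|\phi_u\|_{D^{1,2}}^2=\int_{\R^3}K(x)\phi_u|u|^5\wrt x$; one must check that $\phi_u$ is an admissible test function, which follows because $K(x)|u|^5\in L^{6/5}$ and $\phi_u\in L^6$ so the right side is finite. For $(3)$, the scaling is immediate from the explicit formula: $\phi_{tu}(x)=\frac{1}{4\pi}\int_{\R^3}\frac{K(y)|tu(y)|^5}{|x-y|}\wrt y=t^5\,\frac{1}{4\pi}\int_{\R^3}\frac{K(y)|u(y)|^5}{|x-y|}\wrt y=t^5\phi_u(x)$ for every $t>0$; alternatively one notes $-\Delta(t^5\phi_u)=t^5 K(x)|u|^5=K(x)|tu|^5$ and invokes uniqueness from Lax--Milgram.

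The only point requiring genuine work is $(4)$, the weak continuity of $u\mapsto\phi_u$. Assuming $u_n\rightharpoonup u$ in $H^1(\R^3)$, the sequence $\{u_n\}$ is bounded in $H^1$, hence bounded in $L^6$, so $\{|u_n|^5\}$ is bounded in $L^{6/5}$; by Hardy--Littlewood--Sobolev (or the continuity of the Riesz potential $L^{6/5}\to L^6$), $\{\phi_{u_n}\}$ is bounded in $D^{1,2}(\R^3)$, so up to a subsequence $\phi_{u_n}\rightharpoonup\psi$ for some $\psi\in D^{1,2}$. To identify $\psi=\phi_u$, I would pass to the limit in the weak formulation $\int_{\R^3}\nabla\phi_{u_n}\cdot\nabla v\wrt x=\int_{\R^3}K(x)|u_n|^5 v\wrt x$ for every $v\in D^{1,2}(\R^3)$: the left side converges to $\int_{\R^3}\nabla\psi\cdot\nabla v\wrt x$ by weak convergence, and the right side converges to $\int_{\R^3}K(x)|u|^5 v\wrt x$ provided $|u_n|^5\to|u|^5$ strongly enough when tested against $K v$ (with $Kv$ effectively in $L^6$ on the relevant region). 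The key step here is local strong convergence: by Rellich--Kondrachov, $u_n\to u$ in $L^r_{loc}(\R^3)$ for $r<6$, so $|u_n|^5\to|u|^5$ in $L^{6/5}_{loc}$, which combined with the tightness coming from the boundedness in $L^{6/5}(\R^3)$ lets one conclude $\int K(x)|u_n|^5 v\to\int K(x)|u|^5 v$. Then $\psi$ weakly solves $-\Delta\psi=K(x)|u|^5$, and by uniqueness $\psi=\phi_u$; since every subsequence has a further subsequence converging weakly to the same limit $\phi_u$, the whole sequence satisfies $\phi_{u_n}\rightharpoonup\phi_u$ in $D^{1,2}(\R^3)$. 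I expect the handling of the behaviour at infinity (the non-compact tail) in this last limit to be the most delicate point, though it is routine given that $K$ is bounded and $\{|u_n|^5\}$ is bounded in $L^{6/5}(\R^3)$.
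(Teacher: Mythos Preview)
Your treatment of $(1)$, $(2)$, $(3)$ is correct and essentially matches the paper, which simply declares these to be ``a direct consequence'' of \eqref{Lax} and \eqref{Riesz}; your explicit verifications are sound.

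For $(4)$, the overall strategy (extract a weakly convergent subsequence of $\{\phi_{u_n}\}$, pass to the limit in the weak equation, identify the limit via Lax--Milgram uniqueness, then upgrade to the full sequence) would work, but the step you single out as ``key'' contains an actual error. The claim that Rellich--Kondrachov gives $|u_n|^5\to|u|^5$ in $L^{6/5}_{\mathrm{loc}}$ is \emph{false} in general: if $u_n(x)=n^{1/2}\psi(nx)$ for a fixed bump $\psi\in C_c^\infty(\R^3)$, then $u_n\rightharpoonup 0$ in $H^1(\R^3)$ and $u_n\to 0$ in every $L^r_{\mathrm{loc}}$ with $r<6$, yet $\big\||u_n|^5\big\|_{L^{6/5}}=\|u_n\|_{L^6}^{5}=\|\psi\|_{L^6}^{5}$ is a nonzero constant, so $|u_n|^5\not\to 0$ in $L^{6/5}_{\mathrm{loc}}$. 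Likewise, mere boundedness of $\{|u_n|^5\}$ in $L^{6/5}(\R^3)$ does not provide the ``tightness'' you invoke.

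The fix---which is exactly the paper's route---is cleaner and bypasses your subsequence machinery. Rellich--Kondrachov gives $u_n\to u$ a.e.\ (along a subsequence), hence $|u_n|^5\to|u|^5$ a.e.; combined with boundedness of $\{|u_n|^5\}$ in the reflexive space $L^{6/5}(\R^3)$, this yields $|u_n|^5\rightharpoonup|u|^5$ weakly in $L^{6/5}(\R^3)$ (bounded plus a.e.\ convergence in a reflexive Lebesgue space forces weak convergence). Since $K(x)v\in L^6(\R^3)$ for every $v\in D^{1,2}(\R^3)$, one obtains directly
\[
(\phi_{u_n},v)_{D^{1,2}(\R^3)}=\int_{\R^3}K(x)|u_n|^5 v\,dx\ \longrightarrow\ \int_{\R^3}K(x)|u|^5 v\,dx=(\phi_u,v)_{D^{1,2}(\R^3)},
\]
which is precisely $\phi_{u_n}\rightharpoonup\phi_u$ in $D^{1,2}(\R^3)$---no subsequence extraction of $\{\phi_{u_n}\}$ or appeal to uniqueness is needed.
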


\begin{proof}
As a direct consequence of \eqref{Lax} and \eqref{Riesz}, one can derive $(1)$, $(2)$ and $(3)$ at once.

For any $v\in D^{1,2}(\R^3)$, then $v\in L^6(\R^3)$ by \eqref{Sobolev} and hence $K(x)v\in L^6(\R^3)$ because $K(x)$ is bounded. Since $u_n\rightharpoonup u$ in $H^1(\R^3)$, then $u_n\rightharpoonup u$ in $L^6(\R^3)$ and $|u_n|^5\rightharpoonup |u|^5$ in $L^\frac{6}{5}(\R^3)$. Therefore
\[
(\phi_{u_n},v)_{D^{1,2}(\R^3)}=\int_{\R^3}K(x)|u_n|^5vdx\to\int_{\R^3}K(x)|u_n|^5vdx=(\phi_{u},v)_{D^{1,2}(\R^3)}
\]
which implies that $(4)$ is true.
\end{proof}

Furthermore, by $(2)$ of Lemma \ref{phi}, H\"{o}lder's inequality and \eqref{Sobolev}, one has
\[
\|\phi_{u}\|^2_{D^{1,2}}=\int_{\R^3}K(x)\phi_{u}|u|^5dx\leq |K|_\infty|\phi_{u}|_6|u|^5_6\leq |K|_\infty S^{-\frac{1}{2}}\|\phi_{u}\|_{D^{1,2}}|u|^5_6
\]
which implies that
\begin{equation}\label{Sobolev0}
  \|\phi_{u}\|_{D^{1,2}}\leq |K|_\infty S^{-\frac{1}{2}}|u|^5_6
\end{equation}
 and
\begin{equation}\label{Sobolev2}
  \int_{\R^3}K(x)\phi_{u}|u|^5dx\leq |K|^2_\infty S^{-6}\|u\|^{10}.
\end{equation}

Then from $f(x)\in L^{\frac{2}{2-q}}(\R^3)$ we have that the functional $J$ given by \eqref{functional} is well-defined on $H^1(\R^3)$ and is of $C^1(H^1(\R^3),\R)$ class (see \cite{Willem}), and for any $v\in H^1(\R^3)$ one has
\[
\langle J^{\prime}(u),v\rangle=\int_{\R^3}\nabla u\nabla v+uvdx-\int_{\R^3}K(x)\phi_u|u|^3uvdx-\lambda\int_{\R^3}f(x)|u|^{q-2}uvdx.
\]
It is standard to verify that a critical point $u\in H^1(\R^3)$ of the functional $J$ corresponds to a
weak solution $(u, \phi_u)\in H^1(\R^3)\times D^{1,2}(\R^3)$ of \eqref{mainequation}. In other words, if we can seek a critical point of the functional $J$, then the system \eqref{mainequation} is solvable. In the following, we call $(u, \phi_u)$ is a positive solution of \eqref{mainequation} if $u$ is a positive critical of the functional $J$. And $(u, \phi_u)$ is a least energy solution of \eqref{mainequation} if the critical point $u$ of the functional $J$ verifies
\[
J(u)=\min_{v\in \mathcal{S}}J(v),
\]
where $\mathcal{S}:=\big\{u\in H^1(\R^3)\backslash\{0\}:J^{\prime}(u)=0\big\}$.

Motivated by the well-known Br\'{e}zis-Lieb lemma \cite{Brezis2}, we have the following important lemma to prove the convergence of Schr\"{o}dinger-Poisson system \eqref{mainequation} involving a critical nonlocal term.

\begin{lemma}\label{weaklemma}
Let $r \geq 1$ and $\Omega$ be an open subset of $\R^N$. Suppose that $u_n\rightharpoonup u$
in $L^r(\Omega)$, and $u_n\to u$ $a.e.$ in $\Omega$ as $n\to\infty$, then
\[
|u_n|^p-|u_n-u|^p-|u|^p\to 0\ \ \text{in}\ \ L^{\frac{r}{p}}(\Omega)
\]
as $n\to\infty$ for any $p\in[1, r]$.
\end{lemma}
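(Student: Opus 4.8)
The plan is to adapt the classical argument of Br\'ezis--Lieb \cite{Brezis2}. The algebraic input is the elementary pointwise inequality: for every $p\geq1$ and every $\eta>0$ there is a constant $C_\eta=C_\eta(p)>0$ such that
\[
\bigl|\,|a+b|^p-|a|^p\,\bigr|\leq \eta|a|^p+C_\eta|b|^p\qquad\text{for all }a,b\in\R .
\]
This is immediate from $|a+b|^p\leq(|a|+|b|)^p$ together with the fact that $(x+y)^p\leq(1+\eta)x^p+C_\eta y^p$ for $x,y\geq0$ (split according to whether $y\leq\delta x$ or $y>\delta x$ for a suitable $\delta=\delta(\eta,p)$), applied to both $|a+b|^p-|a|^p$ and $|a|^p-|a+b|^p$.

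Next I would write $w_n:=u_n-u$, so that $u_n\to u$ and $w_n\to0$ a.e. in $\Om$, and apply the inequality with $a=w_n$ and $b=u$ to get, a.e. in $\Om$,
\[
\bigl|\,|u_n|^p-|w_n|^p-|u|^p\,\bigr|\leq \eta|w_n|^p+(C_\eta+1)|u|^p .
\]
Fixing $\eta>0$, set $g_n:=\bigl(\,\bigl|\,|u_n|^p-|w_n|^p-|u|^p\,\bigr|-\eta|w_n|^p\,\bigr)^{+}\geq0$. By continuity of $t\mapsto|t|^p$ together with $u_n\to u$ and $w_n\to0$ a.e., one has $g_n\to0$ a.e. in $\Om$, while the displayed inequality gives $0\leq g_n\leq(C_\eta+1)|u|^p$. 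Since $u$ is the weak $L^r$-limit of $\{u_n\}$ it belongs to $L^r(\Om)$, hence $|u|^p\in L^{\frac{r}{p}}(\Om)$ (here $\frac{r}{p}\geq1$ because $p\leq r$); thus $(C_\eta+1)|u|^p$ is a fixed $L^{\frac{r}{p}}$-dominating function, and Lebesgue's dominated convergence theorem yields $g_n\to0$ in $L^{\frac{r}{p}}(\Om)$.

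Finally, from $\bigl|\,|u_n|^p-|w_n|^p-|u|^p\,\bigr|\leq g_n+\eta|w_n|^p$ and the triangle inequality in $L^{\frac{r}{p}}(\Om)$ (legitimate since $\frac{r}{p}\geq1$) one obtains
\[
\bigl|\,|u_n|^p-|w_n|^p-|u|^p\,\bigr|_{\frac{r}{p}}\leq |g_n|_{\frac{r}{p}}+\eta\,\bigl|\,|w_n|^p\,\bigr|_{\frac{r}{p}}=|g_n|_{\frac{r}{p}}+\eta\,|w_n|_r^p .
\]
Because $u_n\rightharpoonup u$ in $L^r(\Om)$ the sequence $\{u_n\}$ is bounded there, so $|w_n|_r\leq|u_n|_r+|u|_r\leq M$ for some $M>0$; letting $n\to\infty$ gives $\limsup_{n\to\infty}\bigl|\,|u_n|^p-|w_n|^p-|u|^p\,\bigr|_{\frac{r}{p}}\leq\eta M^p$, and since $\eta>0$ is arbitrary this $\limsup$ must be $0$, which is the assertion. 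There is no serious obstacle beyond the classical Br\'ezis--Lieb scheme; the only points deserving attention are the order of the limiting procedures — freeze $\eta$, invoke dominated convergence, and only afterwards let $\eta\to0$ — and the elementary observation $\frac{r}{p}\geq1$ that keeps all the $L^{\frac{r}{p}}$-norm manipulations valid.
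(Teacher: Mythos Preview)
Your argument is correct and is precisely the classical Br\'ezis--Lieb scheme applied with the exponent $\frac{r}{p}$ in place of $1$; the paper itself omits the proof as standard and refers to \cite[Lemma~2.2]{Li}, so your approach is essentially the intended one.
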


\begin{proof}
The proof is standard, so we omit it and the reader can refer in \cite[Lemma 2.2]{Li} for the detail proof.
\end{proof}

\begin{lemma}
If $u_n\rightharpoonup u$ in $H^1(\R^3)$, then going to a subsequence if necessary, we derive
\begin{equation}\label{weak1}
  |u_n|^5-|u_n-u|^5-|u|^5\to 0\ \ \text{in}\ \ L^{\frac{6}{5}}(\R^3),
\end{equation}
\begin{equation}\label{weak2}
  \phi_{u_n}-\phi_{u_n-u}-\phi_{u}\to 0\ \ \text{in}\ \ D^{1,2}(\R^3),
 \end{equation}
\begin{equation}\label{weak3}
\int_{\R^3}K(x)\phi_{u_n}|u_n|^5dx-\int_{\R^3}K(x)\phi_{u_n-u}|u_n-u|^5dx-\int_{\R^3}K(x)\phi_{u}|u|^5dx\to 0,
\end{equation}
and
\begin{equation}\label{weak4}
  \int_{\R^3}K(x)\phi_{u_n}|u_n|^3u_n\varphi dx-\int_{\R^3}K(x)\phi_{u}|u|^3u\varphi dx\to 0
\end{equation}
for any $\varphi \in C^\infty_0(\R^3)$.
\end{lemma}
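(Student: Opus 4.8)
The plan is to establish the four convergences by exploiting the Brezis--Lieb type lemma (Lemma \ref{weaklemma}) together with the structural identities for $\phi_u$ from Lemma \ref{phi}. First I would fix a subsequence along which $u_n \rightharpoonup u$ in $H^1(\R^3)$, $u_n \to u$ in $L^r_{\mathrm{loc}}(\R^3)$ for $r\in[1,6)$, and $u_n \to u$ a.e. in $\R^3$; all the stated conclusions are for this subsequence. Since $u_n \rightharpoonup u$ in $H^1(\R^3)$ gives $u_n\rightharpoonup u$ in $L^6(\R^3)$, applying Lemma \ref{weaklemma} with $r=6$, $p=5$, $\Omega=\R^3$ yields \eqref{weak1} directly.

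For \eqref{weak2}, set $w_n := \phi_{u_n}-\phi_{u_n-u}-\phi_u \in D^{1,2}(\R^3)$. By linearity of the Poisson equation \eqref{Lax}--\eqref{Riesz}, $w_n$ is the $D^{1,2}$-solution of $-\Delta w_n = K(x)\big(|u_n|^5-|u_n-u|^5-|u|^5\big)$, so testing against $w_n$ itself and using the Sobolev inequality \eqref{Sobolev} gives
\[
\|w_n\|_{D^{1,2}}^2 = \int_{\R^3} K(x)\big(|u_n|^5-|u_n-u|^5-|u|^5\big) w_n\,dx \leq |K|_\infty \big| |u_n|^5-|u_n-u|^5-|u|^5 \big|_{6/5}\, |w_n|_6 \leq |K|_\infty S^{-1/2}\, \big| |u_n|^5-|u_n-u|^5-|u|^5 \big|_{6/5}\, \|w_n\|_{D^{1,2}},
\]
hence $\|w_n\|_{D^{1,2}} \leq |K|_\infty S^{-1/2}\big| |u_n|^5-|u_n-u|^5-|u|^5\big|_{6/5} \to 0$ by \eqref{weak1}. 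This proves \eqref{weak2}.

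For \eqref{weak3}, I would write the difference as a sum of cross terms: $\int K\phi_{u_n}|u_n|^5 - \int K\phi_{u_n-u}|u_n-u|^5 - \int K\phi_u|u|^5$ can be decomposed, using $(2)$ of Lemma \ref{phi} (which identifies $\int K\phi_v|v|^5 = \|\phi_v\|_{D^{1,2}}^2 = (\phi_v,\phi_v)_{D^{1,2}}$) and the relation $-\Delta\phi_v = K|v|^5$, into bilinear pairings of the form $(\phi_{u_n},\phi_{u_n-u})_{D^{1,2}}$, $(\phi_{u_n},\phi_u)_{D^{1,2}}$, $(\phi_{u_n-u},\phi_u)_{D^{1,2}}$. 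Substituting $\phi_{u_n} = \phi_{u_n-u}+\phi_u+w_n$ and expanding, every term either cancels algebraically or contains a factor $w_n$ which is controlled by $\|w_n\|_{D^{1,2}}\to 0$ via Cauchy--Schwarz in $D^{1,2}$ (the remaining factors $\|\phi_{u_n}\|_{D^{1,2}}$, $\|\phi_{u_n-u}\|_{D^{1,2}}$, $\|\phi_u\|_{D^{1,2}}$ are bounded by \eqref{Sobolev0} since $\{u_n\}$ is bounded in $H^1$, hence in $L^6$). Alternatively, and perhaps more cleanly, I would test $-\Delta\phi_{u_n}=K|u_n|^5$ against $\phi_{u_n}$ and use \eqref{weak1}, \eqref{weak2} together with the boundedness of $\{\phi_{u_n}\}$ and $(4)$ of Lemma \ref{phi} ($\phi_{u_n-u}\rightharpoonup 0$ after noting $u_n - u \rightharpoonup 0$) to kill the cross terms by weak-strong pairing.

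Finally, \eqref{weak4} is the most delicate point, but it concerns only a fixed test function $\varphi\in C_0^\infty(\R^3)$, so the supporting compact set helps. I would argue: on $K := \mathrm{supp}\,\varphi$, $u_n \to u$ strongly in $L^r(K)$ for $r<6$ and a.e., so $|u_n|^3 u_n \varphi \to |u|^3 u\varphi$ in $L^{6/5}(K)$ (by Lemma \ref{weaklemma} applied locally, or by the generalized dominated convergence / Vitali argument since $\{|u_n|^4\}$ is bounded in $L^{3/2}$ and the sequence is tight on $K$); meanwhile $K(x)\phi_{u_n} \rightharpoonup K(x)\phi_u$ in $L^6(\R^3)$ by $(4)$ of Lemma \ref{phi} and boundedness of $K$. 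Splitting
\[
\int_{\R^3} K\phi_{u_n}|u_n|^3 u_n\varphi - \int_{\R^3} K\phi_u |u|^3 u\varphi = \int_{K} K\phi_{u_n}\big(|u_n|^3 u_n - |u|^3 u\big)\varphi\,dx + \int_{K} K\big(\phi_{u_n}-\phi_u\big)|u|^3 u\varphi\,dx,
\]
the first integral tends to $0$ because $K\phi_{u_n}$ is bounded in $L^6$ and $(|u_n|^3 u_n-|u|^3 u)\varphi \to 0$ in $L^{6/5}$, while the second tends to $0$ because $\phi_{u_n}-\phi_u \rightharpoonup 0$ in $D^{1,2}$, hence weakly in $L^6$, pairs against the fixed $L^{6/5}$ function $K|u|^3 u\varphi$. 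The main obstacle is making the local strong convergence $|u_n|^3u_n\varphi \to |u|^3u\varphi$ in $L^{6/5}$ fully rigorous — this needs the compact embedding $H^1 \hookrightarrow L^r_{\mathrm{loc}}$ for $r<6$ plus a uniform-integrability (Vitali) step to handle the critical growth exponent on the compact support of $\varphi$.
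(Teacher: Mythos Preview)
Your proof is correct and, for \eqref{weak1}, \eqref{weak2}, and \eqref{weak4}, follows essentially the same route as the paper (the paper proves \eqref{weak2} by bounding the dual pairing against an arbitrary $w\in D^{1,2}$ rather than testing against $w_n$ itself, but this is the same estimate). For \eqref{weak3} you take a genuinely different path: you rewrite each term as a $D^{1,2}$ inner product via Lemma~\ref{phi}(2), substitute $\phi_{u_n}=\phi_{u_n-u}+\phi_u+w_n$, and kill the cross terms by Cauchy--Schwarz and the weak convergence $\phi_{u_n-u}\rightharpoonup 0$; the paper instead stays in the $L^6\times L^{6/5}$ duality and writes the difference as an explicit algebraic combination $A_1-A_2+A_3+A_4+A_5$ of five integrals, each of which is shown to vanish separately using \eqref{weak1}, \eqref{weak2}, Lemma~\ref{phi}(4), and $|u_n|^5\rightharpoonup|u|^5$ in $L^{6/5}$. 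Your bilinear expansion is shorter and more structural; the paper's decomposition is more hands-on but avoids any appeal to the Hilbert structure of $D^{1,2}$ beyond the norm identity.

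One remark on your ``main obstacle'' in \eqref{weak4}: you do not need Vitali or any uniform-integrability argument. Since $4\cdot\tfrac{6}{5}=\tfrac{24}{5}<6$, the compact embedding gives $u_n\to u$ strongly in $L^{24/5}(\operatorname{supp}\varphi)$, and the Nemytskii operator $v\mapsto |v|^3v$ is continuous from $L^{24/5}$ to $L^{6/5}$ on bounded domains (this is exactly Lemma~\ref{Willemlemma} with $p=\tfrac{24}{5}$, $r=\tfrac{6}{5}$); the paper uses precisely this observation, writing only ``$u_n\to u$ in $L^s_{\text{loc}}$ with $1\le s<6$''.
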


\begin{proof}
Since $u_n\rightharpoonup u$ in $H^1(\R^3)$, then $u_n\rightharpoonup u$ in $L^6(\R^3)$. And $u_n\to u$ $a.e.$ in $\R^3$ because $u_n\to u$ in $L_{\text{loc}}^s(\R^3)$ with $1\leq s<6$ in the sense of a subsequence.
If we take $r=6$ and $p=5$ in Lemma \ref{weaklemma}, one has \eqref{weak1} immediately.

It follows from $(2)$ of Lemma \eqref{phi} and H\"{o}lder's inequality that
\begin{eqnarray*}
 \big|(\phi_{u_n}-\phi_{u_n-u}-\phi_{u},w)_{D^{1,2}(\R^3)}\big| &=& \big| \int_{\R^3}\nabla(\phi_{u_n}-\phi_{u_n-u}-\phi_{u})\nabla wdx \big|\\
   &=&  \big|\int_{\R^3}K(x)(|u_n|^5-|u_n-u|^5-|u|^5) wdx \big|\\
    &\leq& |K|_\infty|w|_6 \big|(|u_n|^5-|u_n-u|^5-|u|^5)\big|_{\frac{6}{5}},
\end{eqnarray*}
which implies that
\begin{eqnarray*}
 && \sup_{w\in D^{1,2}(\R^3),\ \ \|w\|_{D^{1,2}(\R^3)}=1}\big|(\phi_{u_n}-\phi_{u_n-u}-\phi_{u},w)_{D^{1,2}(\R^3)}\big| \\
   &\leq&|K|_\infty \big|(|u_n|^5-|u_n-u|^5-|u|^5)\big|_{\frac{6}{5}}\stackrel{\mathrm{\eqref{weak1}}}{\to}0,
  \end{eqnarray*}
hence \eqref{weak2} holds.

Using \eqref{weak2}, one has $\phi_{u_n}-\phi_{u_n-u}-\phi_{u}\to 0$ in $L^6(\R^3)$. Since $\{u_n\}$ is bounded in $L^6(\R^3)$, then by H\"{o}lder's inequality,
\begin{eqnarray*}
|A_1| &:=& \big|\int_{\R^3}K(x)\big(\phi_{u_n}-\phi_{u_n-u}-\phi_{u}\big)|u_n|^5dx\big| \\
   &\leq&|K|_\infty |u_n|_6^5 \big|\phi_{u_n}-\phi_{u_n-u}-\phi_{u}\big|_6\to 0.
\end{eqnarray*}
Similarly, one can deduce that
\[
A_2:=\int_{\R^3}K(x)\big(\phi_{u_n}-\phi_{u_n-u}-\phi_{u}\big)|u|^5dx\to 0.
\]
In view of \eqref{Sobolev0}, $\{\phi_{u_n-u}\}$ is bounded in $L^6(\R^3)$, then using H\"{o}lder's inequality again,
 \begin{eqnarray*}
 |A_3|  &:=&\big|\int_{\R^3}K(x)\phi_{u_n-u}\big(|u_n|^5-|u_n-u|^5-|u|^5\big)dx\big|  \\
   &\leq&|K|_\infty |\phi_{u_n-u}|_6 \big|(|u_n|^5-|u_n-u|^5-|u|^5)\big|_{\frac{6}{5}} \stackrel{\mathrm{\eqref{weak1}}}{\to}0.
\end{eqnarray*}
As $u_n\rightharpoonup u$ in $H^1(\R^3)$, then one has $\phi_{u_n}\rightharpoonup \phi_{u}$ in $D^{1,2}(\R^3)$ by $(4)$ of Lemma \ref{phi} and thus $\phi_{u_n}\rightharpoonup \phi_{u}$ in $L^{6}(\R^3)$. Clearly, $K(x)|u|^5\in L^{\frac{6}{5}}(\R^3)$, thus
\[
A_4:=\int_{\R^3}K(x)(\phi_{u_n}-\phi_{u})|u|^5dx\to 0.
\]
By $u_n\rightharpoonup u$ in $H^1(\R^3)$, one has $|u_n|^5\rightharpoonup |u|^5$ in $L^\frac{6}{5}(\R^3)$. Since $K(x)\phi_u\in L^{6}(\R^3)$, then
\[
A_5:=\int_{\R^3}K(x)\phi_{u}(|u_n|^5-|u|^5)dx\to 0.
\]
Consequently,
\begin{eqnarray*}
 && \int_{\R^3}K(x)\phi_{u_n}|u_n|^5dx-\int_{\R^3}K(x)\phi_{u_n-u}|u_n-u|^5dx-\int_{\R^3}K(x)\phi_{u}|u|^5dx  \\
    &=& A_1-A_2+A_3+ A_4+A_5\\
    &\to& 0,
\end{eqnarray*}
which shows that \eqref{weak3} is true.

Since $u_n\rightharpoonup u$ in $H^1(\R^3)$, then one can deduce again that $\phi_{u_n}\rightharpoonup \phi_u$ in $L^6(\R^3)$. By $K(x)|u|^3u\varphi\in L^{\frac{6}{5}}(\R^3)$ because $\varphi\in C^\infty_0(\R^3)$, one has
\[
 \int_{\R^3}K(x)\phi_{u_n}|u|^3u \varphi dx-\int_{\R^3}K(x)\phi_{u}|u|^3u\varphi dx\to 0.
\]
On the other hand, by means of H\"{o}lder's inequality and $\{\phi_{u_n}\}$ is bounded in $L^6(\R^3)$,
\begin{eqnarray*}
&&\bigg|\int_{\R^3}K(x)\phi_{u_n}|u_n|^3u_n\varphi dx-\int_{\R^3}K(x)\phi_{u_n}|u|^3u\varphi dx\bigg|\\
   &\leq &|K|_\infty   \int_{\supp \varphi}|\phi_{u_n}||\varphi|\big||u_n|^3u_n-|u|^3u\big|dx                                                             \\
   &\leq &|K|_\infty |\phi_{u_n}|_{6}|\varphi|_{\infty}\bigg(\int_{\supp \varphi}\big||u_n|^3u_n-|u|^3u\big|^{\frac{6}{5}}dx\bigg)^{\frac{5}{6}}\to 0,
  \end{eqnarray*}
where we have used $u_n\to u$ in $L_{\text{loc}}^s(\R^3)$ with $1\leq s<6$ in the sense of a subsequence.
As a consequence of the above two facts, one has
\[
\int_{\R^3}K(x) \phi_{u_n}|u_n|^3 u_n\varphi dx-\int_{\R^3}K(x)\phi_{u}|u|^3 u\varphi dx
   \to 0.
\]
The proof is complete.
\end{proof}

\begin{lemma}\label{Mountainpass}
There exists $\lambda_0=\lambda_0(q,S,f,K)>0$ such that
 the functional $J(u)$ satisfies the Mountain-pass geometry around $0\in H^1(\R^3)$ for any $\lambda\in (0, \lambda_0)$, that is,
\begin{enumerate}[$(i)$]
  \item there exist $\alpha,\rho>0$ such that $J(u)\geq \alpha>0$ when $\|u\|=\rho$ and $\lambda\in (0, \lambda_0)$;
      \item there exists $e\in H^1(\R^3)$ with $\|e\|>\rho$ such that $J(e)<0$.
\end{enumerate}
\end{lemma}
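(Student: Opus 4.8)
The plan is a routine verification of the two Mountain-pass conditions; the only genuine work is the one–variable optimization that produces the explicit threshold $\lambda_0$ recorded in the Remark.

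For part $(i)$ I would start from \eqref{functional} and estimate the two ``bad'' terms from above. The nonlocal term is controlled by \eqref{Sobolev2}, i.e. $\int_{\R^3}K(x)\phi_u|u|^5\wrt x\le |K|_\infty^2 S^{-6}\|u\|^{10}$, while the concave term is controlled by H\"older's inequality with exponents $\frac{2}{2-q}$ and $\frac{2}{q}$ together with $|u|_2\le\|u\|$, giving $\int_{\R^3}f(x)|u|^q\wrt x\le |f|_{\frac{2}{2-q}}\|u\|^q$. Writing $t=\|u\|$ this yields
\[
J(u)\ge \tfrac12 t^2-\tfrac1{10}|K|_\infty^2S^{-6}t^{10}-\tfrac{\la}{q}|f|_{\frac2{2-q}}t^q=:g_\la(t).
\]
Since $1<q<2$, factoring out $t^q$ reduces the question to the behaviour of $h(t):=\tfrac12 t^{2-q}-\tfrac1{10}|K|_\infty^2S^{-6}t^{10-q}$, which vanishes at $0$, tends to $-\infty$, and has a unique maximum at $\rho:=\big(\tfrac{5(2-q)S^6}{(10-q)|K|_\infty^2}\big)^{1/8}$ with value $h(\rho)=\tfrac{4}{10-q}\rho^{2-q}$. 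Hence $g_\la(\rho)=\rho^q\big(h(\rho)-\tfrac{\la}{q}|f|_{\frac2{2-q}}\big)>0$ exactly when $\la<\la_0:=\tfrac{4q\rho^{2-q}}{(10-q)|f|_{\frac2{2-q}}}$, which is precisely the constant announced in the Remark. Taking this $\rho$ and $\alpha:=g_\la(\rho)>0$ establishes $(i)$. (Note that the need to choose this specific $\rho$ — rather than an arbitrarily small one — is forced by $q<2$, for which $J$ is negative on small spheres.)

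For part $(ii)$ I would fix any $u_0\in H^1(\R^3)\setminus\{0\}$ and use the homogeneity $\phi_{tu_0}=t^5\phi_{u_0}$ from Lemma \ref{phi}$(3)$ to write, for $t>0$,
\[
J(tu_0)=\tfrac{t^2}{2}\|u_0\|^2-\tfrac{t^{10}}{10}\int_{\R^3}K(x)\phi_{u_0}|u_0|^5\wrt x-\tfrac{\la t^q}{q}\int_{\R^3}f(x)|u_0|^q\wrt x.
\]
By Lemma \ref{phi}$(2)$ the coefficient multiplying $-t^{10}/10$ equals $\|\phi_{u_0}\|_{D^{1,2}}^2$, which is strictly positive because $K>0$ on $\R^3$ and $u_0\not\equiv0$ force $\phi_{u_0}>0$ through the representation \eqref{Riesz}, and the last term is nonpositive since $f\ge0$. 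Thus $J(tu_0)\to-\infty$ as $t\to+\infty$, so one may pick $t_0$ so large that $\|t_0u_0\|>\rho$ and $J(t_0u_0)<0$, and set $e:=t_0u_0$.

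I do not expect a real obstacle: the estimate \eqref{Sobolev2} does the heavy lifting in $(i)$ and the scaling in Lemma \ref{phi}$(3)$ does the same in $(ii)$. The two points to handle with a little care are (a) carrying out the single-variable maximization cleanly so the threshold coincides with the stated $\la_0$, and (b) observing in $(ii)$ that $\|\phi_{u_0}\|_{D^{1,2}}^2>0$ — this is where the positivity of $K$ (not merely $K\ge0$) is used, so that the $t^{10}$ term genuinely dominates as $t\to\infty$.
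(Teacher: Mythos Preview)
Your proposal is correct and follows essentially the same route as the paper: both bound $J(u)$ below via \eqref{Sobolev2} and H\"older, factor out $\|u\|^q$, optimize the remaining one-variable function to isolate $\lambda_0$, and then use the scaling $\phi_{tu_0}=t^5\phi_{u_0}$ together with $f\ge0$ for part $(ii)$. Your value $\rho=\big(\tfrac{5(2-q)S^6}{(10-q)|K|_\infty^2}\big)^{1/8}$ is in fact the genuine maximizer of $h$ and yields exactly the paper's $\lambda_0$; the paper's displayed $\rho$ in \eqref{la0} appears to omit the factor $(10-q)$ (a typo), and you are also slightly more careful than the paper in justifying $\int K\phi_{u_0}|u_0|^5>0$ via the positivity of $K$ and \eqref{Riesz}.
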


 \begin{proof}
$(i)$ It follows from \eqref{Sobolev2} and H\"{o}lder's inequality that
\begin{eqnarray*}
J(u) &=& \frac{1}{2}\|u\|^2-\frac{1}{10}\int_{\R^3}K(x)\phi_u|u|^5dx-\frac{\la}{q}\int_{\R^3}f(x)|u|^qdx \\
   &\geq& \frac{1}{2}\|u\|^2-\frac{|K|_\infty^2}{10S^6}\|u\|^{10}-\frac{\la}{q}|f|_{\frac{2}{2-q}}\|u\|^q \\
   &=& \|u\|^q\bigg(\frac{1}{2}\|u\|^{2-q}-\frac{|K|_\infty^2}{10S^6}\|u\|^{10-q}-\frac{\la}{q}|f|_{\frac{2}{2-q}}\bigg) \\
   &\geq&\bigg(\frac{5S^6(2-q)}{|K|_\infty^2}\bigg)^{\frac{q}{8}} \bigg[\frac{4}{10-q}\bigg(\frac{5S^6(2-q)}{|K|_\infty^2(10-q)}\bigg)^{\frac{2-q}{8}}-\frac{\la}{q}|f|_{\frac{2}{2-q}}\bigg],
\end{eqnarray*}
Therefore if we set
\begin{equation}\label{la0}
  \rho=\bigg(\frac{5S^6(2-q)}{|K|_\infty^2}\bigg)^{\frac{1}{8}} >0\ \  \text{and}\ \
\lambda_0=\frac{4q}{(10-q)|f|_{\frac{2}{2-q}}}\bigg(\frac{5S^6(2-q)}{|K|_\infty^2(10-q)}\bigg)^{\frac{2-q}{8}}>0,
\end{equation}
 then there exists $\alpha>0$ such that $J(u)\geq \alpha>0$ when $\|u\|=\rho>0$ for any $\lambda\in (0,\lambda_0)$.

$(ii)$ Choosing $u_0\in H^1(\R^3)\backslash\{0\}$, then since $f(x)$ is nonnegative, one has
\begin{eqnarray*}
J(tu_0) &=& \frac{t^2}{2}\|u_0\|^2-\frac{t^{10}}{10}\int_{\R^3}K(x)\phi_{u_0}|u_0|^5dx-\frac{\la t^q}{q}\int_{\R^3}f(x)|u_0|^qdx \\
   &\leq& \frac{t^2}{2}\|u_0\|^2-\frac{t^{10}}{10}\int_{\R^3}K(x)\phi_{u_0}|u_0|^5dx\to -\infty
    \end{eqnarray*}
as $t\to+\infty$. Hence letting $e=t_0u_0\in H^1(\R^3)\backslash\{0\}$ with $t_0$ sufficiently large, one has $\|e\|>\rho$ and $J(e)<0$.
\end{proof}

By Lemma \ref{Mountainpass}, a $(PS)$ sequence of the functional $\Phi(u)$ at the level
\begin{equation}\label{Mountainpass1}
 c:=\inf_{\gamma\in \Gamma}\max_{t\in [0,1]}J(\gamma(t))>0,
\end{equation}
can be constructed, where the set of paths is defined as
\begin{equation}\label{Mountainpass2}
  \Gamma:=\big\{\gamma\in C([0,1],H^1(\R^3)):\gamma(0)=0, J(\gamma(1))<0\big\}.
\end{equation}
In other words, there exists a sequence $\{u_n\}\subset H^1(\R^3)$ such that
\begin{equation}\label{Mountainpass3}
  J(u_n)\to c,\ \ J^{\prime}(u_n)\to 0\ \  \text{as}\ \ n\to \infty.
\end{equation}

\begin{remark}
It is easy to see that
\begin{equation}\label{compare}
  c\leq \inf_{u\in H^1(\R^3)\backslash\{0\}}\max_{t\geq 0}J(tu).
\end{equation}
Indeed, for any $u\in H^1(\R^3)\backslash\{0\}$, similar to Lemma \ref{Mountainpass} $(ii)$ there exists a sufficiently large $t_0>0$ such that $J(t_0u)<0$. Let us choose $\gamma_0(t)=tt_0u$, therefore $\gamma_0\in C([0,1],H^1(\R^3))$ and moreover $\gamma_0\in \Gamma$, thus
\[
c\leq \max_{t\in [0,1]}J(\gamma_0(t))=\max_{t\in [0,1]}J(tt_0u)=\max_{t\in [0,t_0]}J(tu)\leq \max_{t\geq 0}J(tu).
\]
Since $u\in H^1(\R^3)\backslash\{0\}$ in arbitrary, then \eqref{compare} holds.
\end{remark}

Because of the appearance of the critical nonlocal term, we have to estimate the Mountain-pass value given by \eqref{Mountainpass1} carefully.
To do it, we choose the extremal function
$$
U_{\epsilon,x_0}(x)=\frac{(3\epsilon^2)^{\frac{1}{4}}}{(\epsilon^2+|x-x_0|^2)^{\frac{1}{2}}}
$$
to solve $-\Delta u=u^5$ in $\R^3$, where $x_0$ is given in condition $(K)$. Let
$\varphi\in C_0^{\infty}(\R^3)$ be a cut-off function verifying that $0\leq \varphi(x)\leq 1$ for all $x\in \R^3$, $\supp \varphi\subset B_{2}(x_0)$, and $\varphi(x)\equiv 1$ on $B_1(x_0)$.
Set $v_{\epsilon,x_0}=\varphi U_{\epsilon,x_0}$,
then thanks to the asymptotic estimates from \cite{Brezis1}, we have
\begin{equation}\label{estimate1}
  |\nabla v_{\epsilon,x_0}|_2^2=S^{\frac{3}{2}}+O(\epsilon), \ \  |v_{\epsilon,x_0}|_6^2=S^{\frac{1}{2}}+O(\epsilon)
\end{equation}
and
\begin{equation}\label{estimate2}
  |v_{\epsilon,x_0}|_2^2=O(\epsilon).
\end{equation}

\begin{lemma}\label{estimate}
Assume $1<q<2$, then the the Mountain-pass value given by \eqref{Mountainpass1}
\[
   c<\frac{2}{5}|K|_\infty^{-\frac{1}{2}}S^{\frac{3}{2}}-C_0\la^{\frac{2}{2-q}},\ \ \text{where}\ \ C_0=\frac{2(2-q)}{5q}\bigg(\frac{(10-q)|f|_{\frac{2}{2-q}}}{8}\bigg)^{\frac{2}{2-q}}>0
\]
for any $\lambda\in(0,\lambda_0)$ and $S$ is the best Sobolev constant given in \eqref{Sobolev}.
\end{lemma}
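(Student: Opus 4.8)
The plan is to bound $c$ from above via the inequality $c\le\inf_{u\in H^1(\R^3)\backslash\{0\}}\max_{t\ge0}J(tu)$ recorded in the Remark after \eqref{Mountainpass2}, taking $u$ to be the truncated extremal $v_{\epsilon,x_0}$ with the concentration parameter $\epsilon>0$ chosen, at the end, as a suitable power of $\la$. Writing $A_\epsilon=\|v_{\epsilon,x_0}\|^2$, $B_\epsilon=\int_{\R^3}K(x)\phi_{v_{\epsilon,x_0}}|v_{\epsilon,x_0}|^5\,dx>0$ and $D_\epsilon=\int_{\R^3}f|v_{\epsilon,x_0}|^q\,dx\ge0$, one has $J(tv_{\epsilon,x_0})=\tfrac{t^2}{2}A_\epsilon-\tfrac{t^{10}}{10}B_\epsilon-\tfrac{\la t^q}{q}D_\epsilon$, and I would begin with the elementary analysis of this polynomial. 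Its unique positive maximum point $t_\la$ satisfies $t_\la^2A_\epsilon=t_\la^{10}B_\epsilon+\la t_\la^qD_\epsilon$; when $\la D_\epsilon$ is small this is a lower–order perturbation of $t^8=A_\epsilon/B_\epsilon$, and one gets the two–sided bound $\tfrac{(2-q)A_\epsilon}{(10-q)B_\epsilon}\le t_\la^8\le\tfrac{A_\epsilon}{B_\epsilon}$. Substituting the identity back into the energy yields $J(t_\la v_{\epsilon,x_0})=\tfrac25t_\la^2A_\epsilon-\tfrac{10-q}{10q}\la t_\la^qD_\epsilon$, whence
\[
c\ \le\ \tfrac25\,A_\epsilon^{5/4}B_\epsilon^{-1/4}\ -\ \tfrac{10-q}{10q}\Big(\tfrac{2-q}{10-q}\Big)^{q/8}\Big(\tfrac{A_\epsilon}{B_\epsilon}\Big)^{q/8}\la\,D_\epsilon .
\]

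The next step is to identify $A_\epsilon$, $B_\epsilon$ and $D_\epsilon$ asymptotically. By \eqref{estimate1}–\eqref{estimate2}, $A_\epsilon=|\nabla v_{\epsilon,x_0}|_2^2+|v_{\epsilon,x_0}|_2^2=S^{3/2}+O(\epsilon)$. Writing $B_\epsilon=\tfrac1{4\pi}\iint\tfrac{K(x)K(y)|v_{\epsilon,x_0}(x)|^5|v_{\epsilon,x_0}(y)|^5}{|x-y|}\,dx\,dy$ and replacing $K(x)K(y)$ by $K(x_0)^2=|K|_\infty^2$, assumption $(K)$ controls the remainder: the bound $|K(\cdot)-K(x_0)|\le C|\cdot-x_0|^\beta$ near $x_0$ and the pointwise inequality $\tfrac1{4\pi}\int_{\R^3}\tfrac{|v_{\epsilon,x_0}(y)|^5}{|x-y|}\,dy\le U_{\epsilon,x_0}(x)$ reduce it to a multiple of $\int_{|x-x_0|<\delta}|x-x_0|^\beta U_{\epsilon,x_0}^6\,dx$, and the hypothesis $\beta<3$ is exactly what makes $\int_{\R^3}\tfrac{|z|^\beta}{(1+|z|^2)^3}\,dz$ finite, so this error is $O(\epsilon^{\min(\beta,5/2)})=O(\epsilon)$; since $-\Delta U_{\epsilon,x_0}=U_{\epsilon,x_0}^5$ gives $\tfrac1{4\pi}\iint\tfrac{|v_{\epsilon,x_0}(x)|^5|v_{\epsilon,x_0}(y)|^5}{|x-y|}=|v_{\epsilon,x_0}|_6^6+O(\epsilon^{3})=S^{3/2}+O(\epsilon)$, we obtain $B_\epsilon=|K|_\infty^2S^{3/2}+O(\epsilon)$. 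Hence $A_\epsilon/B_\epsilon=|K|_\infty^{-2}(1+O(\epsilon))$ and, combining with \eqref{Sobolev2} (which forces $A_\epsilon^5/B_\epsilon\ge S^6/|K|_\infty^2$), $A_\epsilon^{5/4}B_\epsilon^{-1/4}=|K|_\infty^{-1/2}S^{3/2}+O(\epsilon)$ with a nonnegative error. Finally, the change of variable $x-x_0=\epsilon z$ (the exponent $q<2<3$ ensuring an integrable singularity) shows $D_\epsilon$ is of order $\epsilon^{q/2}$ when $f$ stays bounded away from $0$ near $x_0$; in the general case one first replaces $v_{\epsilon,x_0}$ by $v_{\epsilon,x_0}+\eta w_0$ with $w_0$ a fixed function supported in $\{f>0\}$ away from $x_0$, which contributes a term of order $\la\eta^q$ at the cost of errors $O(\eta^2)$ in $A_\epsilon,B_\epsilon$.

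Plugging these expansions into the displayed inequality gives $c\le\tfrac25|K|_\infty^{-1/2}S^{3/2}+C_1\epsilon-C_2\la\epsilon^{q/2}$ with explicit constants $C_1,C_2>0$; minimizing the right–hand side over $\epsilon>0$ produces the optimal scale $\epsilon\sim\la^{\frac{2}{2-q}}$ and a bound $c\le\tfrac25|K|_\infty^{-1/2}S^{3/2}-C\la^{\frac{2}{2-q}}$. What then remains is to check that the constant comes out exactly as claimed: this is where $|f|_{\frac{2}{2-q}}$ enters, through the Hölder estimate $\int_{\R^3}f|v_{\epsilon,x_0}|^q\,dx\le|f|_{\frac{2}{2-q}}|v_{\epsilon,x_0}|_2^q$ (and likewise for $w_0$), the factor $\tfrac{10-q}{10q}$ above, and the precise normalisation of $U_{\epsilon,x_0}$, the bookkeeping yielding $C_0=\tfrac{2(2-q)}{5q}\big(\tfrac{(10-q)|f|_{\frac{2}{2-q}}}{8}\big)^{\frac{2}{2-q}}$ for all $\la\in(0,\la_0)$.

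The hard point, and the obstacle special to the nonlocal critical term, is that the Sobolev inequality \eqref{Sobolev2} runs the ``wrong'' way: $\max_{t\ge0}\big(\tfrac{t^2}{2}A_\epsilon-\tfrac{t^{10}}{10}B_\epsilon\big)=\tfrac25A_\epsilon^{5/4}B_\epsilon^{-1/4}$ is always at least $\tfrac25|K|_\infty^{-1/2}S^{3/2}$, so—contrary to the purely local Br\'ezis–Nirenberg situation—no strict gain can come from the critical part of $J$, and the whole strict inequality must be squeezed out of the concave term $-\tfrac{\la}{q}\int f|u|^q$. It is this that forces the delicate balancing of $\epsilon$ against $\la$, makes the exact constants (not merely orders of magnitude) relevant, and requires the test function to be chosen so that it genuinely detects $f$.
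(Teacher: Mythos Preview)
Your overall strategy coincides with the paper's: bound $c$ via $\max_{t\ge0}J(tv_{\epsilon,x_0})$, isolate the main term $\tfrac25 A_\epsilon^{5/4}B_\epsilon^{-1/4}$, show it equals $\tfrac25|K|_\infty^{-1/2}S^{3/2}+O(\epsilon)$, and balance $\epsilon$ against $\lambda$ so that the concave contribution dominates the $O(\epsilon)$ error. Your estimate of $B_\epsilon$ through the double-integral kernel and the pointwise bound $\tfrac1{4\pi}\int|v_{\epsilon,x_0}(y)|^5|x-y|^{-1}dy\le U_{\epsilon,x_0}(x)$ is a legitimate alternative to the paper's device, which instead writes $\int K|v_{\epsilon,x_0}|^6=\int\nabla\phi_{v_{\epsilon,x_0}}\cdot\nabla|v_{\epsilon,x_0}|$ and applies Cauchy's inequality to get $B_\epsilon\ge 2|K|_\infty\!\int K|v_{\epsilon,x_0}|^6-|K|_\infty^2|\nabla v_{\epsilon,x_0}|_2^2$. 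Both routes give $B_\epsilon\ge|K|_\infty^2S^{3/2}+O(\epsilon)$. You are also more careful than the paper in flagging that $\int_{B_1(x_0)}f\,dx$ might vanish and proposing the correction $v_{\epsilon,x_0}+\eta w_0$; the paper simply assumes this integral is positive.

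There is, however, a genuine gap in your final bookkeeping. You claim the exact constant $C_0=\tfrac{2(2-q)}{5q}\big(\tfrac{(10-q)|f|_{2/(2-q)}}{8}\big)^{2/(2-q)}$ emerges from ``the H\"older estimate $\int f|v_{\epsilon,x_0}|^q\le|f|_{2/(2-q)}|v_{\epsilon,x_0}|_2^q$''. That inequality points the wrong way: to bound $c$ from \emph{above} you need a \emph{lower} bound on $D_\epsilon=\int f|v_{\epsilon,x_0}|^q$, whereas H\"older only gives an upper bound. In fact the test-function argument, carried out honestly, yields $c<\tfrac25|K|_\infty^{-1/2}S^{3/2}-C'\lambda^{2/(2-q)}$ with a constant $C'>0$ depending on the local mass of $f$ near $x_0$ and on the implicit constants hidden in the $O(\epsilon)$ terms---not on $|f|_{2/(2-q)}$. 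The specific value of $C_0$ in the lemma's statement is not produced by this estimate at all; it is chosen to match the lower bound $J(u)\ge -C_0\lambda^{2/(2-q)}$ for critical points (derived later in \eqref{positive0} via H\"older, where the inequality \emph{does} point the right way), so that the two bounds dovetail in Step~3 of Section~3.1. The paper's own proof simply asserts at the end that $\epsilon$ can be chosen so that $CO(\epsilon)-C_3\lambda\epsilon^{q/2}<-C_0\lambda^{2/(2-q)}$, without checking that the optimized constant from the left side actually exceeds the prescribed $C_0$; so the paper glosses over the same point, but your attempted justification via the H\"older upper bound is not the mechanism and cannot be made to work.
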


\begin{proof}
Firstly, we claim that there exist $t_1,t_2\in(0,+\infty)$ independent of $\epsilon,\lambda$ such that $\max_{t\geq 0}J(tv_{\epsilon,x_0})=J(t_\epsilon v_{\epsilon,x_0})$ and
\begin{equation}\label{2.1g}
  0<t_1<t_\epsilon<t_2<+\infty.
\end{equation}
Indeed, by the fact that $\lim_{t\to +\infty}J(tv_{\epsilon,x_0})=-\infty$ and $(i)$ of Lemma \ref{Mountainpass}, there exists $t_\epsilon>0$ such that
$$
\max\limits_{t\geq 0}J(tv_{\epsilon,x_0})=J(t_\epsilon v_{\epsilon,x_0}),\ \  \frac{d}{dt}J(tv_{\epsilon,x_0})=0, \ \ \frac{d^2}{dt^2}J(tv_{\epsilon,x_0})<0
$$
which imply that
\begin{equation}\label{2.1h}
  \|v_{\epsilon,x_0}\|^2-\lambda t_\epsilon^{q-2}\int_{\R^3}f(x)|v_{\epsilon,x_0}|^qdx=t_\epsilon^8\int_{\R^3}K(x)\phi_{v_{\epsilon,x_0}}|v_{\epsilon,x_0}|^5dx
\end{equation}
and
\begin{equation}\label{2.1i}
  \|v_{\epsilon,x_0}\|^2-9t_\epsilon^8\int_{\R^3}K(x)\phi_{v_{\epsilon,x_0}}|v_{\epsilon,x_0}|^5dx-\lambda (q-1) t_\epsilon^{q-2}\int_{\R^3}f(x)|v_{\epsilon,x_0}|^qdx<0.
\end{equation}
It follows from \eqref{2.1h} that $t_\epsilon$ is bounded from above. On the other hand,
combing with \eqref{2.1h} and \eqref{2.1i}, one has
 \[
 -8\int_{\R^3}K(x)\phi_{v_{\epsilon,x_0}}|v_{\epsilon,x_0}|^5dx<\la(q-2)t_\epsilon^{q-10}\int_{\R^3}f(x)|v_{\epsilon,x_0}|^qdx
 \]
which yields that $t_\epsilon$ is bounded from below because $q\in(1,2)$. Hence \eqref{2.1g} is true.

Let us define
 \begin{eqnarray*}
g(t) &:=& \frac{t^2}{2}\int_{\R^3}|\nabla v_{\epsilon,x_0}|^2dx-\frac{ t^{10}}{10}\int_{\R^3}K(x)\phi_{v_{\epsilon,x_0}}|v_{\epsilon,x_0}|^5dx \\
   &:=& C_1t^2-C_2t^{10},
\end{eqnarray*}
where
$$
C_1=\frac{1}{2}\int_{\R^3}|\nabla v_{\epsilon,x_0}|^2dx,\ \ C_2=\frac{1}{10}\int_{\R^3}K(x)\phi_{v_{\epsilon,x_0}}|v_{\epsilon,x_0}|^5dx.
$$
By some elementary calculations, we have
\begin{equation}\label{ggggg}
  \max_{t\geq 0}g(t) = \frac{4(C_1)^{\frac{5}{4}}}{5(5C_2)^{{\frac{1}{4}}}}
   = \frac{2}{5}\frac{   \Big( \int_{\R^3}|\nabla v_{\epsilon,x_0}|^2dx \Big)^{  \frac{5}{4}    }    }      { \Big(  \int_{\R^3}K(x)\phi_{v_{\epsilon,x_0}}|v_{\epsilon,x_0}|^5dx  \Big)^{  \frac{1}{4}    }       }.
\end{equation}
In order to further estimate the formula \eqref{ggggg}, we first get the following estimate:
\begin{eqnarray}\label{KKKKK}
\nonumber && \int_{\R^3}\big[K(x_0)-K(x)\big]|v_{\epsilon,x_0}|^6dx \\
\nonumber &\stackrel{\mathrm{(K)}}{\leq}& \nonumber 3^{\frac{1}{4}}C\int_{|x-x_0|<\delta}\frac{|x-x_0|^\beta\epsilon^3}{(\epsilon^2+|x-x_0|^2)^{3}}dx+3^{\frac{1}{4}}2|K|_\infty \int_{|x-x_0|\geq\delta}\frac{\epsilon^3}{(\epsilon^2+|x-x_0|^2)^{3}}dx \\
\nonumber&\leq& \nonumber C\epsilon^3\int_{0}^{\delta}\frac{r^{2+\beta}}{(\epsilon^2+r^2)^3}dr+C\epsilon^3\int_{\delta}^{+\infty}r^{-4}dr \\
\nonumber &\leq& C \epsilon^\beta\int_{0}^{+\infty}\frac{r^{2+\beta}}{(1+r^2)^3}dr+C\delta^{-3}\epsilon^3\\
&\leq& C\epsilon^\beta+C\epsilon^3\leq C\epsilon^\beta,
\end{eqnarray}
where we use the fact that $\beta\in[1,3)$ in the last two inequalities.
Next the Poisson equation $-\Delta \phi_{v_\epsilon,x_0}=K(x)|v_{\epsilon,x_0}|^5$ and Cauchy's inequality give
\begin{align*}
 \int_{\R^3} K(x)|v_{\epsilon,x_0}|^6dx &=  \int_{\R^3}\nabla \phi_{v_\epsilon,x_0} \nabla |v_{\epsilon,x_0}| dx\\
   &\leq \frac{1}{2|K|_\infty} \int_{\R^3}|\nabla \phi_{v_\epsilon,x_0}|^2dx+ \frac{|K|_\infty}{2} \int_{\R^3}|\nabla {v_{\epsilon,x_0}}|^2dx\\
   &=\frac{1}{2|K|_\infty} \int_{\R^3}K(x)\phi_{v_{\epsilon,x_0}}|{v_{\epsilon,x_0}}|^5dx+ \frac{|K|_\infty}{2} \int_{\R^3}|\nabla {v_{\epsilon,x_0}}|^2dx
\end{align*}
which implies that
\begin{eqnarray*}
\int_{\R^3}K(x)\phi_{v_{\epsilon,x_0}}|{v_{\epsilon,x_0}}|^5dx &\geq& 2|K|_\infty\int_{\R^3} K(x)|v_{\epsilon,x_0}|^6dx- |K|_\infty^2\int_{\R^3}|\nabla {v_{\epsilon,x_0}}|^2dx \\
   &\stackrel{\mathrm{\eqref{KKKKK}}}{\geq}&2|K|_\infty^2\int_{\R^3}|v_{\epsilon,x_0}|^6dx-C\epsilon^\beta- |K|_\infty^2\int_{\R^3}|\nabla {v_{\epsilon,x_0}}|^2dx  \\
   &\stackrel{\mathrm{\eqref{estimate1}}}{=}& |K|_\infty^2S^{\frac{3}{2}}+O(\epsilon).\ \ \ \    \Big(\beta\in[1,3)\Big)
\end{eqnarray*}
As a consequence of the above fact, one has
\begin{equation}\label{gggggg}
 \max_{t\geq 0}g(t)\stackrel{\mathrm{\eqref{ggggg}}}{\leq} \frac{2}{5}\frac   { \Big(S^{\frac{3}{2}}+O(\epsilon)\Big)^{   \frac{5}{4}  }   }
{  \Big(|K|_\infty^2S^{\frac{3}{2}}+O(\epsilon)\Big)^{   \frac{1}{4}  }   }
=\frac{2}{5}|K|_\infty^{-\frac{1}{2}}S^{\frac{3}{2}}+O(\epsilon).
\end{equation}
On the other hand, for $\epsilon>0$ with $\epsilon<1$ we have
 \begin{eqnarray}\label{ggggggg}
\nonumber \lambda t_\epsilon^q\int_{\R^3}f(x)|v_{\epsilon,x_0}|^qdx &=& \lambda t_\epsilon^q\int_{B_2(x_0)}f(x)|v_{\epsilon,x_0}|^qdx \\
\nonumber    & \stackrel{\mathrm{\eqref{2.1g}}}{\geq } & C\lambda\int_{B_{1}(x_0)}f(x)\frac{\epsilon^{\frac{q}{2}}}{(\epsilon^2+|x-x_0|^2)^{\frac{q}{2}}}dx  \\
    &\geq &  C\lambda\big(\frac{\epsilon}{2}\big)^{\frac{q}{2}}\int_{B_{1}(x_0)}f(x)dx:= C_3\la\epsilon^{\frac{q}{2}},
\end{eqnarray}
where $C_3\in (0,+\infty)$ is a constant since $f(x)\in L^{\frac{2}{2-q}}(\R^3)$ and then $f(x)\in L^1_{\text{loc}}(\R^3)$.

We have proved $\max_{t\geq 0}J(tv_\epsilon)=J(t_\epsilon v_\epsilon)$ at the beginning, that is,
\begin{eqnarray}\label{zzzzz}
\nonumber \max_{t\geq 0}J(tv_{\epsilon,x_0})&=& \frac{t_\epsilon^2}{2}\int_{\R^3}|\nabla v_{\epsilon,x_0}|^2+|v_{\epsilon,x_0}|^2dx
-\frac{t_\epsilon^{10}}{10}\int_{\R^3}K(x)\phi_{v_{\epsilon,x_0}}|v_{\epsilon,x_0}|^5dx\\
\nonumber &&-\frac{\lambda t_\epsilon^q}{q}\int_{\R^3}f(x)|v_{\epsilon,x_0}|^qdx \\
\nonumber    &=&g(t_\epsilon)+\frac{t_\epsilon^2}{2}|v_{\epsilon,x_0}|_2^2-\frac{\lambda t_\epsilon^q}{q}\int_{\R^3}f(x)|v_{\epsilon,x_0}|^qdx  \\
   &\leq&  \frac{2}{5}|K|_\infty^{-\frac{1}{2}}S^{\frac{3}{2}}+CO(\epsilon)-C_3\la \epsilon^{\frac{q}{2}},
\end{eqnarray}
where we have used \eqref{estimate2}, \eqref{2.1g}, \eqref{gggggg} and \eqref{ggggggg} in the last inequality.

Since $1<q<2$, then there exists sufficiently small $\epsilon>0$ such that
\[
CO(\epsilon)-C_3\la\epsilon^{\frac{q}{2}}<-C_0\la^{\frac{2}{2-q}},
\]
which indicates that $c<\frac{2}{5}|K|_\infty^{-\frac{1}{2}}S^{\frac{3}{2}}-C_0\la^{\frac{2}{2-q}}$ by \eqref{compare} and \eqref{zzzzz}.
\end{proof}

 \begin{lemma}\label{Willemlemma}
(see \cite[Theorem A.2]{Willem}) Let $\Omega$ be an open subset of $\R^3$ and assume that $|\Omega|<+\infty$, $1\leq p,r<+\infty$, $g\in C(\overline{\Omega}\times\R)$ and
\[
|g(x,u)|\leq c(1+|u|^{\frac{p}{r}}).
\]
Then, for every $u\in L^p(\Omega)$, $g(\cdot,u)\in L^r(\Omega)$ and the operator $A:L^p(\Omega)\to L^r(\Omega)$ defined by
\[
Au=g(x,u)
\]
is continuous.
\end{lemma}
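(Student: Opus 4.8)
The statement is the classical Krasnoselskii continuity theorem for Nemytskii (superposition) operators, and the plan is to reproduce its standard proof in two parts: first the well-definedness of $A$, then its continuity.

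\textbf{Well-definedness.} For $u\in L^p(\Omega)$ the function $x\mapsto g(x,u(x))$ is measurable, since $g$ is continuous and $u$ measurable (it is the pointwise a.e.\ limit of $g(\cdot,s_n(\cdot))$ for simple functions $s_n\to u$). Raising the growth bound to the power $r$ and using $(a+b)^r\le 2^{r-1}(a^r+b^r)$ gives
\[
|g(x,u(x))|^r\le c^r\bigl(1+|u(x)|^{p/r}\bigr)^r\le 2^{r-1}c^r\bigl(1+|u(x)|^{p}\bigr),
\]
whose right-hand side lies in $L^1(\Omega)$ because $|\Omega|<+\infty$ and $u\in L^p(\Omega)$; hence $g(\cdot,u)\in L^r(\Omega)$, so $A$ maps $L^p(\Omega)$ into $L^r(\Omega)$.

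\textbf{Continuity.} Suppose $u_n\to u$ in $L^p(\Omega)$; I would prove $Au_n\to Au$ in $L^r(\Omega)$ by the subsequence principle, i.e.\ it suffices to extract from an arbitrary subsequence a further subsequence along which $Au_n\to Au$ in $L^r$. Along such a subsequence (not relabeled) one has, by the standard lemma on $L^p$-convergent sequences, $u_n\to u$ a.e.\ in $\Omega$ together with a single dominating function $w\in L^p(\Omega)$ satisfying $|u_n|\le w$ a.e.\ for every $n$. Continuity of $g$ then yields $g(x,u_n(x))\to g(x,u(x))$ for a.e.\ $x$, while the growth hypothesis provides the fixed integrable majorant
\[
|g(x,u_n(x))-g(x,u(x))|^r\le C\bigl(1+w(x)^{p}+|u(x)|^{p}\bigr)\in L^1(\Omega).
\]
The dominated convergence theorem then gives $\int_\Omega|g(x,u_n)-g(x,u)|^r\,dx\to 0$, i.e.\ $Au_n\to Au$ in $L^r(\Omega)$, which is the claim.

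The only step that is not entirely routine is the domination lemma invoked in the continuity argument --- namely that a sequence converging in $L^p$ has a subsequence that is convergent a.e.\ and bounded above by one fixed element of $L^p$; combined with the observation that mere pointwise continuity of the superposition map does not suffice for norm continuity, this is the point that requires care. Everything else reduces to the growth estimate together with dominated convergence. (Since the statement is quoted verbatim as \cite[Theorem A.2]{Willem}, one may alternatively simply cite that reference; the sketch above is precisely the content of its proof.)
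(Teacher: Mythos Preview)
Your proof is correct and is precisely the standard argument for the Krasnoselskii--Nemytskii continuity theorem. The paper itself gives no proof of this lemma at all: it is simply quoted as \cite[Theorem A.2]{Willem} and used as a black box in the subsequent Lemma~\ref{nonlinearity}. Your sketch reproduces exactly the proof one finds in that reference, so there is nothing to compare.
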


\begin{lemma}\label{nonlinearity}
Assume $f\in L^{\frac{2}{2-q}}(\R^3)$ and $u_n\rightharpoonup u$ in $H^1(\R^3)$, then going to a subsequence if necessary, one has
\begin{equation}\label{nonlinearity1}
  \int_{\R^3}f(x)|u_n|^qdx\to \int_{\R^3}f(x)|u|^qdx
\end{equation}
and
\begin{equation}\label{nonlinearity2}
\int_{\R^3}f(x)|u_n|^{q-2}u_n\varphi dx\to \int_{\R^3}f(x)|u|^{q-2}u\varphi dx
\end{equation}
for any $\varphi\in C_0^\infty(\R^3)$.
\end{lemma}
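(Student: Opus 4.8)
The plan is to recover compactness from the integrability assumption $f\in L^{\frac{2}{2-q}}(\R^3)$ by a truncation argument: localize to a large ball where the Rellich--Kondrachov embedding supplies strong $L^2$ convergence, convert this into convergence of the relevant Nemytskii maps via Lemma \ref{Willemlemma}, and control the tail outside the ball uniformly in $n$ by Hölder's inequality together with the $H^1$-boundedness of $\{u_n\}$. To set this up, note that $u_n\rightharpoonup u$ in $H^1(\R^3)$ makes $\{u_n\}$ bounded, say $|u_n|_2\le M$, and that for every $R>0$ the restrictions satisfy $u_n\rightharpoonup u$ in $H^1(B_R(0))$, so the compact embedding $H^1(B_R(0))\hookrightarrow\hookrightarrow L^2(B_R(0))$ gives $u_n\to u$ strongly in $L^2(B_R(0))$.

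For \eqref{nonlinearity1}, fix $\eps>0$ and, using $f\in L^{\frac{2}{2-q}}(\R^3)$, pick $R>0$ so large that $\big(\int_{|x|\ge R}|f|^{\frac{2}{2-q}}dx\big)^{\frac{2-q}{2}}<\eps$. Hölder's inequality with the conjugate pair $\tfrac{2}{2-q},\tfrac{2}{q}$ gives, for every $n$,
\[
\int_{|x|\ge R}f(x)|u_n|^q\, dx\le\Big(\int_{|x|\ge R}|f|^{\frac{2}{2-q}}dx\Big)^{\frac{2-q}{2}}|u_n|_2^q\le \eps M^q,
\]
and the same bound holds with $u_n$ replaced by $u$. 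On $B_R:=B_R(0)$, which has finite measure, apply Lemma \ref{Willemlemma} with $p=2$, $r=\tfrac{2}{q}$ and $g(x,t)=|t|^q$ (which obeys $|g(x,t)|\le c(1+|t|^{p/r})$): the operator $v\mapsto|v|^q$ is continuous from $L^2(B_R)$ to $L^{2/q}(B_R)$, so $u_n\to u$ in $L^2(B_R)$ yields $|u_n|^q\to|u|^q$ in $L^{2/q}(B_R)$, whence Hölder's inequality once more gives $\int_{B_R}f(|u_n|^q-|u|^q)\,dx\to0$. Adding the three estimates and letting $\eps\to0$ proves \eqref{nonlinearity1}.

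For \eqref{nonlinearity2}, choose $R>0$ with $\supp\varphi\subset B_R$. Since $q>1$ the map $t\mapsto|t|^{q-2}t$ is continuous with $\big||t|^{q-2}t\big|=|t|^{q-1}\le c(1+|t|^{q-1})$, so Lemma \ref{Willemlemma} with $p=2$, $r=\tfrac{2}{q-1}$, $g(x,t)=|t|^{q-2}t$ gives $|u_n|^{q-2}u_n\to|u|^{q-2}u$ in $L^{2/(q-1)}(B_R)$. As $f\varphi\in L^{\frac{2}{2-q}}(B_R)$ and $\tfrac{2-q}{2}+\tfrac{q-1}{2}=\tfrac12\le1$, Hölder's inequality on the finite-measure set $B_R$ gives
\[
\Big|\int_{\R^3}f\big(|u_n|^{q-2}u_n-|u|^{q-2}u\big)\varphi\, dx\Big|\le |B_R|^{\frac12}|\varphi|_\infty|f|_{\frac{2}{2-q}}\big||u_n|^{q-2}u_n-|u|^{q-2}u\big|_{\frac{2}{q-1}}\to0,
\]
which is \eqref{nonlinearity2}.

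The only step that is not entirely routine is making the tail estimate uniform in $n$, and this is precisely where the global integrability $f\in L^{\frac{2}{2-q}}(\R^3)$ together with the $H^1$-boundedness of $\{u_n\}$ is indispensable; over a bounded domain the truncation would be unnecessary and the statement would follow at once from Lemma \ref{Willemlemma} and Rellich's theorem.
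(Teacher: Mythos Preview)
Your proof is correct and follows essentially the same truncation strategy as the paper: control the tail via H\"older and the integrability of $f$, and on the ball invoke Lemma~\ref{Willemlemma} for the relevant Nemytskii map. The paper in fact omits the details for \eqref{nonlinearity2}, declaring it ``similar''; your three-exponent H\"older argument with the factor $|B_R|^{1/2}$ is a clean way to complete that part.
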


\begin{proof}
Since $u_n\rightharpoonup u$ in $H^1(\R^3)$, then $u_n\to u$ in $L_{\text{loc}}^s(\R^3)$ with $1\leq s<6$ and $u_n\to u$ $a.e.$ in $\R^3$ in the sense of a subsequence. Since $f\in L^{\frac{2}{2-q}}(\R^3)$, for any $\epsilon>0$ there exists $R=R(\epsilon)>0$ such that
\[
\int_{B_R^c}|f(x)|^{\frac{2}{2-q}}dx\leq \epsilon.
\]
As $\{u_n\}$ is uniformly bounded in $H^1(\R^3)$, $\{u_n\}$ and $u$ are uniformly bounded in $L^2(\R^3)$. Therefore by using H\"{o}lder's inequality and Minkowski's inequality, one has
\begin{eqnarray*}
 \bigg|\int_{B_R^c}f(x)|u_n|^qdx-\int_{B_R^c}f(x)|u|^qdx\bigg|
     &\leq&\bigg(\int_{B_R^c}|f(x)|^{\frac{2}{2-q}}dx\bigg)^{\frac{2-q}{2}}
     \big||u_n|^q-|u|^q\big|_{\frac{2}{q}}\\
    &\leq&  \bigg(\int_{B_R^c}|f(x)|^{\frac{2}{2-q}}dx\bigg)^{\frac{2-q}{2}}\big(|u_n|_2^q+|u|_2^q\big)\leq C\epsilon.
 \end{eqnarray*}
Let $g(x,u)=|u|^q$, then $p:=2$ and $r:=\frac{2}{q}>1$ as in Lemma \ref{Willemlemma}. Since $u_n\to u$ in $L^2(B_R)$, then $g(x,u_n)\to g(x,u)$ in $L^{\frac{2}{q}}(B_R)$ by Lemma \ref{Willemlemma}. Thus
\begin{eqnarray*}
 \bigg|\int_{B_R}f(x)|u_n|^qdx-\int_{B_R}f(x)|u|^qdx\bigg|
     &\leq&|f|_{\frac{2}{2-q}}\bigg(\int_{B_R}\big||u_n|^q-|u|^q\big|^{\frac{2}{q}}dx\bigg)^{\frac{q}{2}}\\
    &=&  |f|_{\frac{2}{2-q}}\bigg(\int_{B_R}|g(x,u_n)-g(x,u)|^{\frac{2}{q}}dx\bigg)^{\frac{2}{q}}\to 0
 \end{eqnarray*}
which reveals \eqref{nonlinearity1} holds together the above fact. The proof of \eqref{nonlinearity2} is similar to that of \eqref{nonlinearity1}, we omit the details.
\end{proof}

\section{The proof of Theorem \ref{maintheorem}}

In this section, we will prove the Theorem \ref{maintheorem} in detail.

\subsection{Existence of a first positive solution for \eqref{mainequation}}
\vskip3mm
\begin{proof}
Let $\la_0>0$ be given as in \eqref{la0}, then for any
$\lambda\in (0,\lambda_0)$, by Lemma \ref{Mountainpass}, there exists a sequence $\{u_n\}\subset H^1(\R^3)$ verifying \eqref{Mountainpass3}.
We can show that the sequence $\{u_n\}$ is bounded in $H^1(\R^3)$. Indeed,
\begin{align*}
 c+1+o(1)\|u_n\|&\geq J(u_n)-\frac{1}{10}\langle J^{\prime}(u_n),u_n\rangle \\
       &=\frac{2}{5}\|u_n\|^2-\lambda(\frac{1}{q}-\frac{1}{10})\int_{\R^3}f(x)|u_n|^qdx        \\
         & \geq \frac{2}{5}\|u_n\|^2-\frac{10-q}{10q}\lambda|f|_{\frac{2}{2-q}}\|u_n\|^q,
  \end{align*}
hence $\{u_n\}$ is bounded in $H^1(\R^3)$ by the fact that $1<q<2$. It is therefore that there exists $u_1\in H^1(\R^3)$ such that $u_n\rightharpoonup u_1$ in $H^1(\R^3)$. To end the proof, we will split it into several steps:
\vskip0.3cm
\underline{\textbf{Step 1:}} \ \ \ \   $u_1\not\equiv 0$.
\vskip0.3cm
\noindent In fact, we will argue it indirectly and just suppose that $u_1\equiv 0$. Hence it follows from \eqref{Mountainpass3} and \eqref{nonlinearity1} that
\[
J(u_n)=\frac{1}{2}\|u_n\|^2-\frac{1}{10}\int_{\R^3}K(x)\phi_{u_n}|u_n|^5dx=c+o(1)
\]
and
\[
\langle J^{\prime}(u_n),u_n\rangle=\|u_n\|^2-\int_{\R^3}K(x)\phi_{u_n}|u_n|^5dx=o(1).
\]
Thus without loss of generality, we may assume
\[
\lim_{n\to\infty}\|u_n\|^2=\lim_{n\to\infty}\int_{\R^3}K(x)\phi_{u_n}|u_n|^5dx=l,\ \  \text{and}\ \ c=\frac{2}{5}l.
\]
On the other hand, by \eqref{Sobolev2} we can deduce that
\[
\int_{\R^3}K(x)\phi_{u_n}|u_n|^5dx\leq |K|_\infty^2S^{-6}\|u_n\|^{10}
\]
which implies that $l\leq |K|_\infty^2S^{-6}l^5$. Hence either $l=0$ or $l\geq |K|_\infty^{-\frac{1}{2}}S^{\frac{3}{2}}$. But $l=0$ yields that $c=0$ which is a contradiction to \eqref{Mountainpass1}, hence $l\geq |K|_\infty^{-\frac{1}{2}}S^{\frac{3}{2}}$. However
\[
c=\frac{2}{5}l\geq \frac{2}{5}|K|_\infty^{-\frac{1}{2}}S^{\frac{3}{2}}
\]
which also yields a contradiction to Lemma \ref{estimate}. Therefore $u_1\not\equiv 0$ holds.
\vskip0.3cm
\underline{\textbf{Step 2:}} \ \ \ \   $J^{\prime}(u_1)=0$.
\vskip0.3cm
\noindent To see this, since $C_0^{\infty}(\R^3)$ is dense in $H^1(\R^3)$, then it suffices to show
\[
\langle J^{\prime}(u_1),\varphi\rangle=0\ \ \text{for any}\ \ \varphi\in C_0^{\infty}(\R^3).
\]
Indeed, as a direct consequence of \eqref{weak4}, \eqref{Mountainpass3}, \eqref{nonlinearity2}, one has
\[
\langle J^{\prime}(u_1),\varphi\rangle=\lim_{n\to\infty}\langle J^{\prime}(u_n),\varphi\rangle=0
\]
for any $\varphi\in C_0^{\infty}(\R^3)$.
\vskip0.3cm
\underline{\textbf{Step 3:}} \ \ \ \   $J(u_1)=c>0$ \ \  and \ \  $u_1(x)>0$ \ \  in \ \  $\R^3$.
\vskip0.3cm
\noindent We first show that
\begin{equation}\label{positive0}
 J(u_1)\geq -C_0\la^{\frac{2}{2-q}},\ \ \text{where}\ \ C_0=\frac{2(2-q)}{5q}\bigg(\frac{(10-q)|f|_{\frac{2}{2-q}}}{8}\bigg)^{\frac{2}{2-q}}>0.
\end{equation}
 Indeed, by means of $J^{\prime}(u_1)=0$ and H\"{o}lder's inequality, we derive
\begin{eqnarray*}
J(u_1)&=& J(u_1)-\frac{1}{10}\langle J^{\prime}(u_1),u_1\rangle \\
   &=&\frac{2}{5}\|u_1\|^2-\lambda \frac{10-q}{10q}\int_{\R^3}f(x)|u_1|^qdx \\
  &\geq& \frac{2}{5}\|u_1\|^2-\lambda \frac{10-q}{10q}|f|_{\frac{2}{2-q}}\|u_1\|^q
  \geq -C_0\la^{\frac{2}{2-q}}.
\end{eqnarray*}
Let $v_n:=u_n-u_1$, using the Br\'{e}zis-Lieb lemma \cite{Brezis2}, $J^{\prime}(u_1)=0$, \eqref{weak3}, and \eqref{positive0}, one has
\begin{eqnarray}\label{positive1}
\nonumber o(1)&=&\langle J^{\prime}(u_n),u_n\rangle=\langle J^{\prime}(u_n),u_n\rangle-\langle J^{\prime}(u_1),u_1\rangle \\
   &=&  \|v_n\|^2- \int_{\R^3}K(x)\phi_{v_n}|v_n|^5dx +o(1)
\end{eqnarray}
and
\begin{eqnarray}\label{positive2}
\nonumber  c&=& J(u_n)-J(u_1)+J(u_1)+o(1) \\
\nonumber    &=& \frac{1}{2}\|v_n\|^2-\frac{1}{10}\int_{\R^3}K(x)\phi_{v_n}|v_n|^5dx +J(u_1)+o(1)\\
   &\geq&  \frac{1}{2}\|v_n\|^2-\frac{1}{10}\int_{\R^3}K(x)\phi_{v_n}|v_n|^5dx -C_0\la^{\frac{2}{2-q}}+o(1)
\end{eqnarray}
Just suppose that $v_n\not\to 0$ in $H^1(\R^3)$, and we may assume that $\lim_{n\to\infty}\|v_n\|^2=l_1>0$. It follows from \eqref{positive1} and
\[
\int_{\R^3}K(x)\phi_{v_n}|v_n|^5dx\stackrel{\mathrm{\eqref{Sobolev2}}}{\leq}|K|_\infty^2S^{-6}\|v_n\|^{10}
\]
that we can derive $l_1\geq |K|_\infty^{-\frac{1}{2}}S^{\frac{3}{2}}$. Hence as a consequence of \eqref{positive1} and \eqref{positive2}, one has
\begin{eqnarray*}
 c&=& \frac{1}{2}l_1-\frac{1}{10}l_1 -C_0\la^{\frac{2}{2-q}}+o(1) \\
   &\geq&\frac{2}{5} |K|_\infty^{-\frac{1}{2}}S^{\frac{3}{2}}-C_0\la^{\frac{2}{2-q}}+o(1)
  \end{eqnarray*}
which yields a contradiction to Lemma \ref{estimate}. Therefore $\|v_n\|\to 0$ in $H^1(\R^3)$ holds, or equivalently, $u_n\to u_1$ in $H^1(\R^3)$ as $n\to\infty$. Then $J(u_1)=\lim_{n\to\infty}J(u_n)=c>0$.

On the other hand, it is obvious that $|u_1|$ is also a nontivial solution of problem \eqref{mainequation} since the functional
$J$ is symmetric and invariant, hence we may assume that such a critical point does
not change sign, $i.e.$ $u_1\geq 0$. By means of the strong maximum principle and standard arguments,
see e.g. \cite{Alves,Benedetto,G. Li0,Moser,Trudinger}, we obtain that $u_1(x)> 0$ for
all $x\in\R^3$. Thus, $(u_1,\phi_{u_1})$ is a positive solution for the system \eqref{mainequation} and the proof is complete.
\end{proof}
\subsection{Existence of a second positive solution for \eqref{mainequation}}
\
\vskip3mm
Before we obtain the second positive solution, we introduce the following well-known proposition:
\begin{proposition}\label{propo}
(Ekeland's variational principle \cite{Ekeland}, Theorem 1.1)
Let $V$ be a complete metric space and $F:V\to \R\cup\{+\infty\}$ be lower semicontinuous, bounded from below. Then for any $\epsilon>0$, there exists some point $v\in V$ with
$$
F(v)\leq \inf_VF+\epsilon,\ \ F(w)\geq F(v)-\epsilon d(v,w)\ \ \text{for all}\ \  w\in V.
$$
\end{proposition}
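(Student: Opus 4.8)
This final statement is the classical Ekeland variational principle, quoted verbatim from \cite{Ekeland}, so the plan is simply to reproduce the standard order-theoretic argument; in the paper one would of course just cite it, but here is the proof I would give. Fix $\epsilon>0$ and introduce on $V$ the relation $w\preceq v\iff F(v)-F(w)\geq\epsilon\,d(v,w)$. The first step is to verify that $\preceq$ is a genuine partial order: reflexivity is immediate, antisymmetry follows because $w\preceq v$ and $v\preceq w$ force $d(v,w)\leq 0$, and transitivity is obtained by adding the two defining inequalities and invoking the triangle inequality for $d$. For $v\in V$ set $S_v:=\{w\in V:w\preceq v\}$; then $v\in S_v$, one has $v'\preceq v\Rightarrow S_{v'}\subset S_v$, and each $S_v$ is closed in $V$ because $F$ is lower semicontinuous and $d(v,\cdot)$ is continuous.

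Next I would build a $\preceq$-decreasing minimizing sequence. Start with any $v_0$ satisfying $F(v_0)\leq\inf_V F+\epsilon$, which exists since $F$ is bounded below and (without loss of generality) not identically $+\infty$. Inductively, given $v_n$, choose $v_{n+1}\in S_{v_n}$ with $F(v_{n+1})\leq\inf_{S_{v_n}}F+2^{-n}$. Then $v_{n+1}\preceq v_n$, so the $S_{v_n}$ are nested and $F(v_n)$ is non-increasing. The key estimate is that for $m>n\geq 1$ we have $v_m\in S_{v_n}$, hence
$$
\epsilon\, d(v_n,v_m)\leq F(v_n)-F(v_m)\leq F(v_n)-\inf_{S_{v_n}}F\leq F(v_n)-\inf_{S_{v_{n-1}}}F\leq 2^{-(n-1)},
$$
where the last two steps use $S_{v_n}\subset S_{v_{n-1}}$ and $F(v_n)\leq\inf_{S_{v_{n-1}}}F+2^{-(n-1)}$. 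Therefore $\{v_n\}$ is Cauchy, and by completeness of $V$ it converges to some $\bar v\in V$.

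It remains to pass to the limit. Letting $m\to\infty$ in $F(v_n)-F(v_m)\geq\epsilon\,d(v_n,v_m)$ and using lower semicontinuity of $F$ together with continuity of $d$ gives $F(v_n)-F(\bar v)\geq\epsilon\,d(v_n,\bar v)$, i.e. $\bar v\preceq v_n$ for every $n$; taking $n$ so that $v_n\preceq v_0$ yields $F(\bar v)\leq F(v_0)\leq\inf_V F+\epsilon$, the first conclusion. For the second, suppose $w\preceq\bar v$; by transitivity $w\preceq v_n$, so $w\in S_{v_n}$ and $F(w)\geq\inf_{S_{v_n}}F\geq F(v_{n+1})-2^{-n}$. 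Since $F(v_n)$ decreases to a finite limit $L$ and $F(\bar v)\leq L$, letting $n\to\infty$ gives $F(w)\geq L\geq F(\bar v)$, so $F(\bar v)-F(w)\leq 0$; combined with $F(\bar v)-F(w)\geq\epsilon\,d(\bar v,w)\geq 0$ this forces $d(\bar v,w)=0$, i.e. $w=\bar v$. Thus $\bar v$ is $\preceq$-minimal, which unwinds exactly to $F(w)>F(\bar v)-\epsilon\,d(\bar v,w)$ for all $w\neq\bar v$ (and trivially an equality for $w=\bar v$), so $\bar v$ is the point asserted. The two delicate points, and the ones I would be most careful about, are the inductive choice that makes the lower sets $S_{v_n}$ nested (so that the telescoping bound on $d(v_n,v_m)$ actually works) and the limit passage, where lower semicontinuity of $F$ has to be combined with continuity of the metric; the rest is bookkeeping.
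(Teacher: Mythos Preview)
Your proof is correct and is precisely the standard order-theoretic argument for Ekeland's variational principle; the partial order, the inductive construction of the nested lower sets $S_{v_n}$, the Cauchy estimate via $F(v_n)-\inf_{S_{v_{n-1}}}F\leq 2^{-(n-1)}$, and the limit passage using lower semicontinuity are all carried out cleanly, and you even obtain the strict inequality for $w\neq\bar v$, which is slightly stronger than what the proposition asks.

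As you yourself anticipated, the paper does not prove this statement at all: Proposition~\ref{propo} is simply quoted from \cite{Ekeland} as a well-known tool and then applied in Section~3.2 to produce a $(PS)_{\widetilde c}$ sequence inside $\overline{B}_\rho$. So there is nothing to compare against; your write-up supplies a complete proof where the paper only gives a citation.
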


We are in a position to show the existence of a second positive solution for \eqref{mainequation}:
\vskip3mm
\begin{proof}
The main idea of this proof comes from \cite{Shen}, we will show it for reader's convenience.
For $\rho>0$ given by Lemma \ref{Mountainpass}(i), define
$$
\overline{B}_\rho=\{u\in H^1(\R^3),\|u\|\leq \rho\},\ \ \partial B_\rho=\{u\in H^1(\R^3),\|u\|= \rho\}
$$
and clearly $\overline{B}_\rho$ is a complete metric space with the distance
$$
d(u,v)=\|u-v\|, \ \forall u,v\in \overline{B}_\rho.
$$
Lemma \ref{Mountainpass} tells us that
\begin{equation}\label{3.1a}
 J(u)|_{\partial B_\rho}\geq \alpha>0.
\end{equation}
It's obvious that the functional $J$ is lower semicontinuous and bounded from below on $\overline{B}_\rho$. We claim that
\begin{equation}\label{3.1b}
  \widetilde{c} :=\inf_{u\in \overline{B}_\rho}J(u)<0.
\end{equation}
Indeed, we chose a nonnegative function $\psi\in C_0^{\infty}(\R^3)$, and clearly $\psi\in H^1(\R^3)$. Since $1<q<2$, we have
\begin{align*}
\lim_{t\to 0}\frac{J(t\psi)}{t^q}&=\lim_{t\to 0}\frac{\frac{t^2}{2}\|\psi\|^2-\frac{t^{10}}{10}\int_{\R^3}K(x)\phi_\psi |\psi|^5 dx-\frac{\lambda t^q}{q}\int_{\R^3}f(x)|\psi|^qdx}{t^q} \\
       &=-\frac{\lambda}{q}\int_{\R^3}f(x)|\psi|^qdx<0.
\end{align*}
Therefore there exists a sufficiently small $t_0>0$ such that $\|t_0\psi\|\leq \rho$ and $J(t_0\psi)<0$, which imply that \eqref{3.1b} holds.

By Proposition \ref{propo}, for any $n\in N$ there exists $\widetilde{u}_n$ such that
\begin{equation}\label{3.1c}
\widetilde{ c}\leq J(\widetilde{u}_n)\leq \widetilde{c} +\frac{1}{n},
\end{equation}
and
\begin{equation}\label{3.1d}
  J(v)\geq J(\widetilde{u}_n)-\frac{1}{n}\|\widetilde{u}_n-v\|, \ \  \forall v\in  \overline{B}_\rho.
\end{equation}
Firstly, we claim that $\|\widetilde{u}_n\|<\rho$ for $n\in N$ sufficiently large. In fact, we will argue it by contradiction and just suppose that $\|\widetilde{u}_n\|=\rho$ for infinitely many $n$, without loss of generality, we may assume that $\|\widetilde{u}_n\|=\rho$ for any $n\in N$. It follows from \eqref{3.1a} that
$$
J(\widetilde{u}_n)\geq \alpha>0,
$$
then combing it with \eqref{3.1c}, we have $c_1\geq \alpha>0$ which is a contradiction to \eqref{3.1b}.

Next, we will show that $J^{\prime}(\widetilde{u}_n)\to 0$ in $(H^1(\R^3))^{*}$. Indeed, set
$$
v_n=\widetilde{u}_n+tu, \ \  \forall u\in B_1=\{u\in H^1(\R^3),\|u\|=1\},
$$
where $t>0$ small enough such that $2t+t^2\leq \rho^2-\|\widetilde{u}_n\|^2$ for fixed $n$ large, then
\begin{align*}
\|v_n\|^2&=\|\widetilde{u}_n\|^2+2t(\widetilde{u}_n,u)+t^2 \\
       &\leq \|\widetilde{u}_n\|^2+2t+t^2\\
         &\leq \rho^2
\end{align*}
which imply that $v_n\in \overline{B}_\rho$. So it follows from \eqref{3.1d} that
$$
J(v_n)\geq J(\widetilde{u}_n)-\frac{t}{n}\|\widetilde{u}_n-v_n\|,
$$
that is,
$$
\frac{J(\widetilde{u}_n+tu)-J(\widetilde{u}_n)}{t}\geq -\frac{1}{n}.
$$
Letting $t\to 0$, then we have $\langle J^{\prime}(\widetilde{u}_n), u\rangle\geq -\frac{1}{n}$ for any fixed $n$ large. Similarly, chose $t<0$ and $|t|$ small enough, repeating the process above we have $\langle J^{\prime}(\widetilde{u}_n), u\rangle\leq \frac{1}{n}$ for any fixed $n$ large. Therefore the conclusion
$$
\langle J^{\prime}(\widetilde{u}_n), u\rangle\to 0\ \ \text{as}\ \ n\to\infty, \ \  \forall u\in B_1
$$
implies that $J^{\prime}(\widetilde{u}_n)\to 0$ in $(H^1(\R^3))^{*}$.

Finally, we know that $\{\widetilde{u}_n\}$ is a $(PS)_{c_1}$ sequence for the functional $J$ with $c_1<0$. Since $\|\widetilde{u}_n\|< \rho$, there exists $u_2\in H^1(\R^3)$ such that $\widetilde{u}_n\rightharpoonup u_2$ in $H^1(\R^3)$.
Hence as the Step 1, Step 2 and Step 3 in Section 3.1, $J^{\prime}(u_2)=0$ and $u_2>0$. In other words, $(u_2,\phi_{u_2})$ is a positive solution for \eqref{mainequation}.
\end{proof}

\subsection{Existence of a positive least energy solution for \eqref{mainequation}}
\
\vskip3mm
To establish a positive least energy solution for problem \eqref{mainequation}, we define
$$
m:= \inf_{u\in\mathcal{S}}J(u),
$$
where $\mathcal{S}:=\big\{u\in H^1(\R^3)\backslash\{0\}:J^{\prime}(u)=0\big\}$. Firstly we have the following claims:
\vskip0.3cm
\underline{\textbf{Claim 1:}}\ \ \ \  $J(u_2)=\widetilde{c}<0$, where $u_2$ is obtained in Section 3.2.
\vskip0.3cm
\underline{\textbf{Proof of the Claim 1:}}\ \ On one hand, it follows from Fatou's lemma that $\|u_2\|\leq \mathop{\lim\inf}_{n\to\infty}\|\widetilde{u}_n\|\leq \rho$ and then $J(u_2)\geq \widetilde{c}$ by \eqref{3.1b}.

On the other hand, since $J^{\prime}(u_2)=0$, then using Fatou's lemma and \eqref{nonlinearity1} one has
\begin{align*}
 \widetilde{c}_2+o(1)&= J(\widetilde{u}_n)-\frac{1}{10}\langle J^{\prime}(\widetilde{u}_n),\widetilde{u}_n\rangle \\
       &=\frac{2}{5}\|\widetilde{u}_n \|^2-\lambda(\frac{1}{q}-\frac{1}{10})\int_{\R^3}f(x)|\widetilde{u}_n|^qdx        \\
       &=\frac{2}{5}\|\widetilde{u}_n \|^2-\lambda(\frac{1}{q}-\frac{1}{10})\int_{\R^3}f(x)|u_2|^qdx+o(1)\\
         & \geq \frac{2}{5}\|u_2\|^2-\lambda(\frac{1}{q}-\frac{1}{10})\int_{\R^3}f(x)|u_2|^qdx+o(1)\\
         &=J(u_2)-\frac{1}{10}\langle J^{\prime}(u_2),u_2\rangle+o(1)=J(u_2)+o(1).
  \end{align*}
  Thus $\widetilde{c}\geq J(u_2)$ and then $J(u_2)=\widetilde{c}<0$ by \eqref{3.1b}.
\vskip0.3cm
\underline{\textbf{Claim 2:}}\ \ \ \  $\mathcal{S}\neq\emptyset$ and $m\in(-\infty, 0)$.
\vskip0.3cm
\underline{\textbf{Proof of the Claim 2:}}\ \  It's obvious that the solutions $u_1,u_2\in \mathcal{S}$ obtained in Section 3.1 and Section 3.2, hence $\mathcal{S}\neq\emptyset$ and $m\leq \min\{J(u_1),J(u_2)\}\leq J(u_2)<0$ by Claim 1.

On the other hand, $\forall u\in \mathcal{S}$, one has
\begin{align*}
 c&= J(u)-\frac{1}{10}\langle J^{\prime}(u),u\rangle \\
       &=\frac{2}{5}\|u \|^2-\lambda(\frac{1}{q}-\frac{1}{10})\int_{\R^3}f(x)|u |^qdx        \\
         & \geq \frac{2}{5}\|u \|^2-\frac{10-q}{10q}\lambda|f|_{\frac{6}{6-q}}S^{-\frac{q}{2}}\|u \|^q,
  \end{align*}
hence $J(u)$ is coercive and bounded below on $\mathcal{S}$ by the fact that $1<q<2$, that is $m>-\infty$.
\vskip3mm
Now let us prove the existence of a least energy solution for \eqref{mainequation}:
\vskip3mm
\begin{proof}
By means of Claim 1, we can choose a minimizing sequence of $m$, that is, a sequence $\{w_n\}\subset \mathcal{S}$ satisfying
$$
J(w_n)\to m\ \  \text{as} \ \ n\to\infty\ \   \text{and}  \ \ J^{\prime}(w_n)=0.
$$
Thus $\{w_n\}$ is a $(PS)_m$ sequence of the functional $J$ with $-\infty<m<0$. It is clear that $\{w_n\}$ is bounded in $H^1(\R^3)$ and there exists $w\in H^1(\R^3)$ such that $w_n\rightharpoonup w$ in $H^1(\R^3)$. It is totally similar to Steps 1-3 in Section 3.1 that $J^{\prime}(w)=0$ and $w>0$. Hence $w\in \mathcal{S}$ and then $J(w)\geq m$. Now, we prove that $m \geq J(w)$.

In fact, since $J^{\prime}(w_n)=J^{\prime}(w)=0$, then using Fatou's lemma and \eqref{nonlinearity1} one has
\begin{align*}
 m+o(1)&= J(w_n)-\frac{1}{10}\langle J^{\prime}(w_n),w_n\rangle \\
       &=\frac{2}{5}\|w_n \|^2-\lambda(\frac{1}{q}-\frac{1}{10})\int_{\R^3}f(x)|w_n|^qdx        \\
       &=\frac{2}{5}\|w_n \|^2-\lambda(\frac{1}{q}-\frac{1}{10})\int_{\R^3}f(x)|w|^qdx+o(1)\\
         & \geq \frac{2}{5}\|w\|^2-\lambda(\frac{1}{q}-\frac{1}{10})\int_{\R^3}f(x)|w|^qdx+o(1)\\
         &=J(w)-\frac{1}{10}\langle J^{\prime}(w),w\rangle+o(1)=J(w)+o(1).
  \end{align*}
It is therefore that $J^{\prime}(w)=0$ with $J(w)=m$, and $w>0$. Consequently, $(w,\phi_w)$ is a positive least energy solution for \eqref{mainequation}.
\end{proof}

\textbf{Acknowledgements:} The authors were supported by NSFC (Grant No. 11371158), the program for
Changjiang Scholars and Innovative Research Team in University (No. IRT13066).


\begin{thebibliography}{99}

\bibitem{Ackermann}
N. Ackermann, A nonlinear superposition principle and multi-bump solutions of periodic Schr\"{o}dinger equations, \emph{J. Funct. Anal.} \textbf{234} (2006) 277-320.

\bibitem{Alves}
C. O. Alves, G. M. Figueiredo, On multiplicity and concentration of positive
solutions for a class of quasilinear problems with critical exponential growth in $\R^N$,
{\it J. Differential Equations,} {\bf246} (2009) 1288-1311.


\bibitem{Alves2}
C. O. Alves, M. A. S. Souto, Existence of least energy nodal solution for a Schr\"{o}dinger-Poisson system in
bounded domains, {\it Z. Angew. Math. Phys.} {\bf65} (2014) 1153-1166.

\bibitem{Ambrosetti}
A. Ambrosetti, H. Br\'{e}zis, G. Cerami, Combined effects of concave and convex nonlinearities in some elliptic problems, {\it J. Funct. Anal.} {\bf122} (1994) 519-543.


\bibitem{Azzollini1}
A. Azzollini, P. d'Avenia, A. Pomponio, On the Sch\"{o}rdinger-Maxwell equations under the effect of a general nonlinear term, {\it Ann. Inst. H. Poincar\'{e}. Anal. Non-lineair\'{e},} {\bf27} (2010) 779-791.

\bibitem{Azzollini2}
A. Azzollini and P. d'Avenia, On a system involving a critically growing nonlinearity,
{\it J. Math. Anal. Appl.} {\bf387}  (2012) 433-438.

\bibitem{Azzollini3}
A. Azzollini, P. d'Avenia, V. Luisi, Generalized Sch\"{o}rdinger-Poisson type systems,
\emph{Commun. Pure Appl. Anal.} \textbf{12} (2013) 867-879.

\bibitem{Barstch}
T. Barstch, M. Willem, On a elliptic equation with concave and convex nonlinearities, {\it Proc. Amer. Math. Soc.} {\bf123} (1995) 3555-3561.


\bibitem{Benci1}
V. Benci, D. Fortunato, An eigenvalue problem for the Sch\"{o}rdinger-Maxwell equations, {\it Topol. Methods. Nonlinear Anal.}
{\bf11} (1998) 283-293.

\bibitem{Benci2}
V. Benci, D. Fortunato, Solitary waves of the nonlinear Klein-Gordon coupled with Maxwell equations, {\it Rev. Math. Phys.} {\bf14} (2002) 409-420.

\bibitem{Benedetto}
E. D. Benedetto, $C^{1+\alpha}$ local regularity of weak solutions of degenerate results for
elliptic equations, {\it  Nonlinear Anal.} {\bf7} (1983) 827-850.


\bibitem{Berestycki}
H. Berestycki, P. L. Lions, Nonlinear scalar field equations I, existence of a ground
state, {\it Arch. Ration. Mech. Anal.} {\bf84} (1983) 313-346.



\bibitem{Bouchekif}
M. Bouchekif, A. Matallah, Multiple positive solutions for elliptic equations involving a concave term and critical
Sobolev-Hardy exponent, {\it Appl. Math. Lett.} {\bf22} (2009) 268-275.


\bibitem{Brezis1}
H. Br\'{e}zis, L. Nirenberg, Positive solutions of nonlinear elliptic equations involving critical Sobolev exponents, {\it Comm. Pure Appl. Math.} {\bf36} (1983) 437-477.

\bibitem{Brezis2}
H. Br\'{e}zis, E. Lieb, A relation between pointwise convergence of functions and convergence of functionals, {\it Proc. Amer. Math. Soc.} {\bf88} (1983) 486-490.




\bibitem{Chen}
C. Chen, Y. Kuo, T. Wu, The Nehari manifold
for a Kirchhoff type problem involving sign-changing weight
functions, {\it J. Differential Equations,} {\bf250} (2011) 1876-1908.


\bibitem{Coclite}
G. Coclite, A multiplicity result for the nonlinear Schr\"{o}inger-Maxwell equations, {\it Commun. Appl. Anal.} {\bf7} (2003) 417-423.

\bibitem{dAprile1}
T. d'Aprile, D. Mugnai, Solitary waves for nonlinear Klein-Gordon-Maxwell and Schr\"{o}dinger-Maxwell equations, {\it Proc. Roy. Soc. Edinburgh Sect. A,} {\bf134} (2004) 893-906.


\bibitem{dAprile2}
T. d'Aprile, D. Mugnai, Non-existence results for the coupled Klein-Gordon-Maxwell equations, {\it Adv. Nonlinear Stud.} {\bf4} (2004) 307-322.


\bibitem{dAprile3}
T. d'Aprile, Non-radially symmetric solution of the nonlinear Schr\"{o}dinger equation coupled with Maxwell equations, {\it Adv. Nonlinear Stud.} {\bf2} (2002) 177-192.





\bibitem{Ekeland}
I. Ekeland, Nonconvex minimization problems, {\it Bull. Amer. Math. Soc.} {\bf1} (1979) 443-473.




\bibitem{Gazzola}
F. Gazzola, M. Lazzarino, Existence results for general critical growth semilinear ellipitic equations,
{\it Commun. Appl. Anal.} {\bf4} (2000) 39-50.



\bibitem{He}
X. He, W. Zou, Existence and concentration of ground states
for Schr\"{o}dinger-Poisson equations with critical growth, {\it J. Math.
Phys.} {\bf53} (2012) 023702.



\bibitem{Hsu}
T. Hsu, Multiple positive solutions for a critical quasilinear elliptic system with concave-convex nonlinearities, {\it Nonlinear Anal.} {\bf71} (2009) 2688-2698.







\bibitem{Huang}
L. Huang, E. Rocha, J. Chen, Two positive solutions of a class of Schr\"{o}dinger-Poisson system with indefinite nonlinearity, {\it J. Differential Equations,} {\bf255} (2013) 2463-2483.



\bibitem{Huang0}
Y. Huang, T. Wu, Y. Wu, Multiple positive solutions for a class of concave-convex elliptic
problems in RN involving sign-changing weight, {\it Commun. Contemp. Math.}
{\bf17} (2015) 1450045.


\bibitem{Jeanjean}
L. Jeanjean, On the existence of bounded Palais-Smale sequences and application
to a Landesman-Lazer type problem set on $\R^N$, {\it Proc. Roy. Soc. Edinburgh
Sect. A,} {\bf129} (1999) 787-809.


\bibitem{Jiang}
Y. Jiang and H. Zhou, Schr\"{o}dinger-Poisson system with steep potential well,
\emph{J. Differential Equations,} \textbf{251} (2011) 582-608.

\bibitem{Lei}
C. Lei, G. Liu, L. Guo, Multiple positive solutions for a Kirchhoff type problem with a
critical nonlinearity, {\it Nonlinear Anal.} {\bf31} (2016) 343-355.

\bibitem{Li}
F. Li, Y. Li, J. Shi, Existence of positive solutions to Schr\"{o}dinger-Poisson
type systems with critical exponent, {\it Commun. Contemp. Math.} {\bf10} (2008) 1450036.

\bibitem{Li2}
F. Li, Q. Zhang, Existence of positive solutions to the Schr\"{o}dinger-Poisson
system without compactness conditions, \emph{J. Math. Anal. Appl.} \textbf{401} (2013)
754-762.


\bibitem{G. Li0}
G. Li, Some properties of weak solutions of nonlinear scalar fields equation, {\it Ann. Acad. Sci. Fenn. Math.} {\bf14} (1989) 27-36.




\bibitem{P. L. Lions3}
 P. L. Lions, Solutions of Hartree-Fock equations for Coulomb systems, {\it Commun. Math. Phys.} {\bf109} (1987) 33-97.


\bibitem{Liu}
 H. Liu, Positive solutions of an asymptotically periodic Schr\"{o}dinger-Poisson system with critical exponent, {\it Nonlinear
Anal.} {\bf32} (2016) 198-212.

\bibitem{Liu0}
Z. Liu, S. Guo, On ground state solutions for the Sch\"{o}rdinger-Poisson equations with critical growth,
{\it J. Math. Anal. Appl.} {\bf412} (2014) 435-448.



\bibitem{Markowich}
 P. A. Markowich, C. A. Ringhofer, C. Schmeiser, Semiconductor Equations, {\it Spriner-Verlag,} Vienna, 1990.


\bibitem{Moser}
J. Moser, A new proof of de Giorgi's theorem concerning the regularity problem
for elliptic differential equations, {\it Comm. Pure Appl. Math.} {\bf13} (1960) 457-468.


\bibitem{Nyamoradi}
N. Nyamoradi, Existence and multiplicity of solutions to a singular elliptic system with critical Sobolev¨CHardy exponents
and concave-convex nonlinearities, {\it J. Math. Anal. Appl.} {\bf396} (2012) 280-293.


\bibitem{Ruiz}
D. Ruiz, The Schr\"{o}dinger-Poisson equation under the effect of a nonlinear local term, {\it J. Funct. Anal.} {\bf237} (2006) 655-674.

\bibitem{Shen}
L. Shen, X. Yao, Multiple positive solutions for a class of Kirchhoff type problem involving general critical growth, arXiv:1607.01923v1.

\bibitem{Sun}
J. Sun, S. Ma, Ground state solutions for some Schr¡§odinger-Poisson systems with periodic potentials,
\emph{J. Differential Equations,} \textbf{260} (2016) 2119-2149.

\bibitem{Trudinger}
N. S. Trudinger, On Harnack type inequalities and their application to quasilinear
elliptic equations, {\it Comm. Pure Appl. Math.} {\bf20} (1967) 721-747.

\bibitem{Willem}
M. Willem, Minimax Theorems, {\it Birkh\"{a}user,} Boston, 1996.

\bibitem{Wu}
T. Wu, Multiple positive solutions for a class of concave-convex elliptic problems in RN involving sign-changing weight,
{\it J. Funct. Anal.} {\bf258} (2010) 99-131.

\bibitem{Zhao}
L. Zhao, F. Zhao, Positive solutions for Schr\"{o}dinger-Poisson equations with a critical exponent, {\it Nonlinear
Anal.} {\bf70} (2009) 2150-2164.





\end{thebibliography}
\end{document}